%
%
%
%
%
%
\RequirePackage{fix-cm}
\documentclass[smallextended]{svjour3}       
\smartqed  

\usepackage[utf8]{inputenc}
\usepackage[english]{babel}
\usepackage{listings}
\usepackage[ruled,vlined]{algorithm2e}
\usepackage{url}
\usepackage{graphicx,subfigure}
\usepackage{psfrag}
\usepackage[leqno]{amsmath,mathtools}
\usepackage{tabularx}
\usepackage{verbatim}
\usepackage{float}
\usepackage{amssymb}
\usepackage{latexsym}
\usepackage{pifont}

\usepackage{color}

\newcommand{\vect}{{\operatorname {vec}}}
\newcommand{\rank}{{\operatorname{rank}}}

\newcommand{\re}{{{\mathrm{Re}}~}}
\newcommand{\im}{{{\mathrm{Im}}~}}

\newcommand{\RR}{\mathbb{R}}

\newcommand{\diag}{\operatorname{diag}}
\newcommand{\kron}{\otimes}
\newcommand{\norm}[1]{||#1||}




\spnewtheorem{assumption}{Assumption}{\bfseries}{\itshape}
%
%
%
%
%
\begin{document}

\title{Implicit algorithms for eigenvector nonlinearities
}


\author{Elias Jarlebring         \and
        Parikshit Upadhyaya 
}


\institute{Elias Jarlebring \at
              Lindstedtsvägen 25,\\
              Department of Mathematics,\\
              SeRC - Swedish e-Science research center,\\
              Royal Institute of Technology, SE-11428, Stockholm, Sweden
              Tel.: +4687906694\\
              \email{eliasj@kth.se}           
           \and
           Parikshit Upadhyaya \at
              Lindstedtsvägen 25,\\
              Department of Mathematics,\\
              SeRC - Swedish e-Science research center,\\
              Royal Institute of Technology, SE-11428, Stockholm, Sweden
              Tel.: +4687908457\\
              \email{eliasj@kth.se}  
}


\maketitle

\begin{abstract}
We study and derive algorithms
for nonlinear eigenvalue problems, where the system
matrix depends on the eigenvector, or several eigenvectors
(or their corresponding invariant subspace).
The algorithms are derived from an implicit
viewpoint. More precisely, we change the Newton update equation
in a way that the next iterate does not only appear linearly
in the update equation.
Although, the modifications of the update equation
make the methods implicit we  show how corresponding
iterates can be computed explicitly.
Therefore we can carry out steps of the implicit
method using explicit procedures.
In several cases, these procedures involve a solution of standard eigenvalue problems. We propose two modifications, one of the modifications leads directly to a well-established method (the self-consistent field iteration)
whereas the other method is to our knowledge new and has several attractive properties. Convergence theory is provided along with several simulations which illustrate the properties of  the algorithms. 
\keywords{eigenvector nonlinearity \and inexact newton \and SCF}
\end{abstract}

\section{Introduction}\label{sec:intro}
Let $M\subset\mathbb{R}^{n\times n}$ denote the set of symmetric
$n\times n$-matrices. Let $A:\mathbb{R}^{n\times p}\to M$, $p<n$. We consider the problem of finding $V\in \mathbb{R}^{n\times p}$ and
a symmetric $S \in \mathbb{R}^{p\times p}$ such that
\begin{eqnarray}\label{eq:main}
A(V)V &= VS,\nonumber\\
V^TV &= I.
\end{eqnarray}
This is the general formulation of the
eigenvector-dependent nonlinear eigenvalue problem.
In our work $A$ satisfies  $A(VP)=A(V)$ for any
non-singular matrix $P$ such that the
range of $V$ can be 
seen as an invariant subspace of $A$. This property
(and a notion of invariant subspace)
is characterized in Section~\ref{sec:inv},
we show how a problem which does satisfy
the condition can be transformed 
where we also give a problem transformation 
applicable
when the condition is not satisfied.

The $V$-dependent matrix $A(V)$ is assumed to satisfy
certain properties such that, such that the columns
span of $V$ can be This notion of
invariant subspace is characterized in Section~\ref{sec:inv},
in particular conditions on $A$.

If $p=1$ the setting reduces to
a class of problem which has received some attention,
mostly in application specific settings. In this case
we need to determine $v\in\RR^{n}$ and $\lambda\in\RR$ such that
\begin{equation}\label{eq:main-p1}
A(v)v=\lambda v
\end{equation}
where $\|v\|=1$. Our results are, in particular, applicable for this case.

A number of algorithms have been proposed for the above problems, as we summarize below. In this paper we propose to derive algorithms based on implicit formulations, in particular
based on implicit improvements of Newton's method.
One proposed algorithm leads to a linearly convergent
well-established method, whereas the other approach
leads to a new method with quadratic convergence. Both
of the implicit approaches have advantageous properties
for certain problem classes that we characterize. 

Our approach is based on viewing iterative eigenvalue solvers
(for eigenvector nonlinearities)
as modifications of Newton's method.
This has also been done for standard eigenvalue problems,
already by Wilkinson and Peters \cite{Peters:1979:INVERSE}. See also the recent review paper \cite{Tapia:2018:INVNEWT} and the paper by Unger \cite{Unger:1950:NICHTLINEARE}.

One of the most important applications for \eqref{eq:main} is
within the field of quantum mechanics and electronic structure calculations.
Discretization methods in combination with the Hartree-Fock approximation
or the Kohn-Sham equations lead to
problems of type \eqref{eq:main}. See standard literature in quantum chemistry \cite{Szabo:1996:QC}. For a survey of numerical methods, see \cite{Saad:2010:ELECSTRUCT}. Considerable application specific research has been carried to specialized algorithms for this problem, mainly
based on the \emph{self-consistent field iteration} (SCF). SCF is an iterative method that involves solving a linear eigenvalue problem in each step until convergence or \textit{self-consistency}. 
The convergence of SCF and its variants has been studied in a number of works which can be classified into two broad categories: the optimization based approach of looking at \eqref{eq:main} as the optimality conditions of a minimization problem \cite{Cances:2000:SCF}, \cite{Levitt:2012:CONVERGENCE}, \cite{liu2015analysis}, \cite{liu2014convergence} or different matrix analysis based approaches \cite{Yang:2009:SCF}, \cite{ParikElias:2018:DENSITY}. Strategies for accelerating the convergence of SCF have also been studied well, e.g., \cite{Rohwedder:2011:DIIS}, \cite{Pulay:1980:CONVERGENCE}.

The special case $p = 1$ has its most important application in quantum physics. Characterization of the ground state of bosons is usually done with the Gross-Pitaevskii equation \cite{Jarlebring:2014:Inverse}, \cite{Altman:2019:JMETHOD}, whose spatial discretization is of the form \eqref{eq:main-p1}. Although SCF can be used in this case too, the more common techniques involve discretization of a gradient flow. See \cite{Bao:2004:BOSEEINSTEIN}, and references therein.

Another class of applications where $p = 1$ arises is in data science, for example, applications such as spectral clustering which rely on computing eigenpairs of the p-Laplacian \cite{Hein:2009:PLAPLACIAN}, \cite{Tudisco:2019:MATPOWER}, \cite{Hein:2010:IPM}. See \cite{DingLu:2018:ROBUST} for a Rayleigh quotient minimization approach for Fisher linear discriminant analysis, which is used in pattern recognition and classification. In \cite{Tudisco:2019:CORE}, the authors propose a new model for the core-periphery detection problem in network science (in the sense of \cite{Borgatti:2000:MODELS}) and show its equivalence to the $p = 1$ problem.

The contributions of the paper can be summarized as follows. In Section~\ref{sec:inv}, we introduce the concept of basis invariance. This allows us to derive an alternate characterization of \eqref{eq:main} in terms of an associated Jacobian. We introduce our implicit algorithms in Section~\ref{sec:impli} motivated by this result. Explicit procedures to carry out these algorithms are derived and studied in Section~\ref{sec:expli}. In Section~\ref{sec:convtheo}, we provide convergence results for these algorithms and Section~\ref{sec:simul} contains numerical examples, illustrating advantages of our approach.

We will extensively use vectorization and devectorization and
introduce the following shorthand. Small letters denote the vectorization of capital letters. For example,
\[
 x=\vect(X),\quad v^{(k)} = \vect(V^{(k)}),\quad s^{(k)} = \vect(S^{(k)}),\quad \vect(I_p) = i_p.
 \]
For any $H:\mathbb{R}^n\to\mathbb{R}^n$, the operator $\frac{dH}{dv}$ denotes forming the Jacobian of $H$ with respect to $v$, where $v$ denotes vectors in $\mathbb{R}^n$.

\section{Notion of invariant subspace}\label{sec:inv}
In order to appropriately generalize the concept
of invariant pairs, we will throughout the
paper make the following assumption on $A$.
\begin{assumption}[\textbf{Basis invariance}]\label{asmp:scalinv}
We consider $A:\mathbb{R}^{n\times p}\to M$ such that
it is a function of the outer product of $W$, i.e., 
\begin{equation}\label{eq:asmp1}
 A(W)=B(WW^T)
\end{equation}
for some $B:M\rightarrow M$. Moreover, we assume that 
\begin{equation}\label{eq:asmp2}
  B(X)=B(h(X))
\end{equation}
for any $X\in M$, where $h:\mathbb{R}\to \mathbb{R}$
denotes the heavyside function, generalized to a matrix sense,
\begin{equation}
h(t) = \begin{cases}1,\quad\quad\textrm{if}\;x> 0\\
                    0,\quad\quad\textrm{if}\;x\leq 0
       \end{cases}
\end{equation}
\end{assumption}
Assumption~\ref{asmp:scalinv} is a generalization of the scaling invariance property for the case $p = 1$ in
\cite{Jarlebring:2014:Inverse}. If $p=1$ and $v\in\mathbb{R}^{n}$, then for any $\alpha \in\mathbb{R}$,
\[
 A(\alpha v)=B(\alpha^2vv^T)=B(h(\alpha^2vv^T))=B(h(vv^T))=A(v).
\]
Moreover, Assumption~\ref{asmp:scalinv}
leads to the fact that $A(W)=A(WP)$ for invertible $P$,
as we shall illustrate in the following theorem.
This is important in our context, since it allows us
to interpret the columns of $W$ as a basis of an invariant
subspace, and $A$ can be viewed as a function of a subspace, i.e.,
it is a function of a vector space, and independent of the basis.
\begin{theorem}\label{thm:subspace}
  If $A$ satisfies the basis invariant 
  conditions  \eqref{eq:asmp1} and \eqref{eq:asmp2}
  then $A(W) = A(WP)$ for any non-singular matrix $P \in\mathbb{R}^{p\times p}$ and $W\in\mathbb{R}^{n\times p}$ where $n\le p$ and $\rank(W) = p$ .
\end{theorem}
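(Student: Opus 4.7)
The plan is to reduce both $A(W)$ and $A(WP)$ to $B$ evaluated on the same symmetric matrix, namely the orthogonal projector onto $\operatorname{range}(W)$. By \eqref{eq:asmp1} we immediately rewrite $A(W) = B(WW^T)$ and $A(WP) = B(WPP^TW^T)$, so the task reduces to showing $B(WW^T) = B(WPP^TW^T)$. By the second basis-invariance condition \eqref{eq:asmp2}, it suffices to establish the matrix identity
\begin{equation*}
h(WW^T) \;=\; h(WPP^TW^T).
\end{equation*}

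The key observation is that for a symmetric positive semidefinite matrix $X$ with eigendecomposition $X = U\Lambda U^T$, the matrix function $h(X) = U\,h(\Lambda)\,U^T$ is precisely the orthogonal projection onto the eigenspace associated with the strictly positive eigenvalues, which coincides with $\operatorname{range}(X)$. So I would carry out the following three short steps: first, observe that $WW^T$ is symmetric positive semidefinite of rank $p$ with range equal to $\operatorname{range}(W)$, so that $h(WW^T)$ equals the orthogonal projector onto $\operatorname{range}(W)$. Second, argue that $PP^T$ is symmetric positive definite (since $P$ is non-singular), which forces $WPP^TW^T$ to be symmetric positive semidefinite with $\operatorname{range}(WPP^TW^T) = \operatorname{range}(W)$; hence $h(WPP^TW^T)$ is the same orthogonal projector. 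Third, chain the equalities: $A(WP) = B(WPP^TW^T) = B(h(WPP^TW^T)) = B(h(WW^T)) = B(WW^T) = A(W)$.

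The only part that requires a moment of care is verifying that $\operatorname{range}(WPP^TW^T) = \operatorname{range}(W)$. One inclusion is trivial since every column of $WPP^TW^T$ lies in $\operatorname{range}(W)$. The reverse inclusion follows from a dimension count: $\operatorname{rank}(WPP^TW^T) = \operatorname{rank}(W)$ because $PP^T$ and $W^TW$ are both invertible (using $\operatorname{rank}(W) = p$), so the two ranges have the same dimension $p$ and must coincide. This is the only genuinely non-mechanical step; everything else is just unpacking definitions and applying the two parts of Assumption~\ref{asmp:scalinv} in sequence.
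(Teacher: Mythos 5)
Your proof is correct and takes essentially the same route as the paper: both hinge on the observation that for a symmetric positive semidefinite matrix $X$, the spectral functional calculus gives $h(X) = $ (orthogonal projector onto $\operatorname{range}(X)$), so $h(WW^T)$ depends only on $\operatorname{range}(W)$ and hence is unchanged when $W$ is replaced by $WP$. The paper packages this by first factoring $W = QR$ and computing $h(WW^T) = QQ^T$ explicitly (i.e.\ writing the projector as $QQ^T$), then noting $WP = Q(RP)$ gives the same $Q$; you instead compare $h(WW^T)$ and $h(WPP^TW^T)$ directly via the range/rank argument — a minor difference of presentation, not of substance.
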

  \begin{proof} 
    Let $W=QS$ for some invertible matrix $S$ and $Q\in\RR^{n\times p}$ orthogonal. Let $V,\Lambda_+$
    be a diagonalization of $SS^T$, i.e., $SS^T=V\Lambda_+V^T$, where $V$ is orthogonal
    and $\Lambda_+$ is a positive diagonal matrix.
 Then, 
\[
h(WW^T) = h(QSS^TQ^T) = h(QV\Lambda_{+}V^TQ^T) = QVh(\Lambda_+)V^TQ^T = QQ^T.
\]
This along with \eqref{eq:asmp1} and \eqref{eq:asmp2} gives us
\[
A(W) = B\left(h(WW^T)\right) = B(QQ^T) = A(Q).
\]
If we let $W=QR$ be a QR-factorization of $W$, we see that $A(W)=A(Q)$ with $S=R$,
and $A(WP)=A(Q)$ with $S=RP$.  This shows that $A(W)=A(WP)$, which
concludes the proof.
    \end{proof}

%
Since $S$ is symmetric, it can be diagonalized as $S = Q_s\Lambda_sQ_s^T$ where $Q_s$ is orthogonal. Problem \eqref{eq:main} can be reformulated using Theorem~\ref{thm:subspace} as
\begin{equation}\label{eq:probreform}
A(V)V = VQ_s\Lambda Q_s^T \implies A(VQ_s)VQ_s = VQ_s\Lambda_s.
\end{equation}
showing that a solution to \eqref{eq:main} can be diagonalized.
\begin{example}[Transformation to basis invariant form]
  The heaviside function usually does not appear directly
  in the standard formulation in NEPv-applications, but can be
  obtained easily.
  In the context of the self-consistent field
  iteration in quantum chemistry, we want to solve the equation
  \begin{equation}  \label{eq:HV}
  H(V)V=VS
  \end{equation}
  where, e.g. $H(V)=H_0+\diag(VV^T)$, which does not satisfy \eqref{eq:asmp1} and \eqref{eq:asmp2}.
  This can be transformed to a problem satisfying
  \eqref{eq:asmp1} and \eqref{eq:asmp2}
  by defining
  \begin{equation}\label{eq:AV_exmp}
  A(V):=H_0+\diag(h(VV^T)).
  \end{equation}
A pair $(V,S)$ is a full rank solution to \eqref{eq:HV} if and
  only if it is a solution to \eqref{eq:main} with $A$ defined as in \eqref{eq:AV_exmp}. However, the
  similarity transformation of a
  solution, i.e., $(VP,P^{-1}SP)$ is
  a solution to \eqref{eq:main}, but not \eqref{eq:HV}.
  \end{example}

We will denote the Jacobian as follows, and
we directly characterize a theoretical property
as a consequence of Assumption~\ref{asmp:scalinv}.
\begin{definition}[Left-hand side Jacobian]
The Jacobian of the vectorization of the LHS of the first subequation of \eqref{eq:main} is denoted as $J:\mathbb{R}^{np}\to\mathbb{R}^{np\times np}$ and given by
\begin{equation}\label{eq:jaclhs}
J(v) = I_p\otimes A(V)+\left(\frac{d}{dv}(I_p\otimes A(V))\hat{v}\right)_{\hat{v}=v}
\end{equation}
\end{definition}

The vectorized form of \eqref{eq:main} can now be written
 as
\begin{equation}\label{eq:main2}
F(V,S) := \begin{pmatrix}(I_p\otimes A(V))v-(I_p\otimes V)s \\
(I_p\otimes V^T)v-i_p \end{pmatrix} = \begin{pmatrix}(I_p\otimes A(V))v-(S^T\otimes I_n)v \\
(I_p\otimes V^T)v-i_p \end{pmatrix} = 0.
\end{equation}
The method we propose will work better for problems
where the Jacobian evaluated in the solution is non-singular.
The Jacobian of \eqref{eq:main2} in the fixed point is given by
\begin{equation}  \label{eq:Jacobian}
\begin{bmatrix}
J(v_*)-{S_*}^T\otimes I_n&-I_p\otimes V_*\\
Z_*&0\\
\end{bmatrix}
\end{equation}
where 
\begin{equation*}
Z_*=\left(\frac{d}{dv}\vect(V^TV)\right)_{V=V^{*}}.
\end{equation*}

As a conseqeunce of the Assumption~\ref{asmp:scalinv},
we conclude the following generalization of \cite[Lemma~2.1]{Jarlebring:2014:Inverse}, which shows a relationship between the eigenpairs of $J$ and $A$. We exploit this relationship later in Section~\ref{sec:impli} and Section~\ref{sec:expli} where we formulate and derive our algorithms respectively.
\begin{theorem}[Eigenproblem equivalence]
\label{thm:eigeqv}
  For any $v\in\RR^{np}$, we have
  \[
J(v)v=(I_p\otimes A(V))v.
  \]
\end{theorem}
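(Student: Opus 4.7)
The plan is to recognize that $J(v)$ is exactly the total Jacobian of the map $f(v):=(I_p\otimes A(V))v=\vect(A(V)V)$, and then to exploit positive homogeneity of $f$ coming from Assumption~\ref{asmp:scalinv} together with Euler's identity for homogeneous functions.

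First, I would verify the Jacobian identification. Applying the product rule for the differentiation of a matrix-valued function times a vector-valued function, one has
\begin{equation*}
\frac{df}{dv}\bigg|_{v} = I_p\otimes A(V) + \left(\frac{d}{dv}(I_p\otimes A(V))\hat v\right)_{\hat v=v},
\end{equation*}
which is precisely the definition \eqref{eq:jaclhs} of $J(v)$. This step is purely a routine computation and is not expected to cause any difficulty.

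Next, I would establish that $f$ is positively homogeneous of degree one. For any nonzero $\alpha\in\RR$, the matrix $\alpha^2 VV^T$ has the same column space as $VV^T$, so $h(\alpha^2 VV^T)=h(VV^T)$ when $h$ is applied via the spectral decomposition as in the proof of Theorem~\ref{thm:subspace}. Combined with \eqref{eq:asmp1} and \eqref{eq:asmp2} of Assumption~\ref{asmp:scalinv}, this yields $A(\alpha V)=B(\alpha^2 VV^T)=B(h(VV^T))=A(V)$, and therefore
\begin{equation*}
f(\alpha v)=(I_p\otimes A(\alpha V))(\alpha v)=\alpha (I_p\otimes A(V))v=\alpha f(v).
\end{equation*}
Differentiating this identity with respect to $\alpha$ and evaluating at $\alpha=1$ gives Euler's identity $\frac{df}{dv}\big|_{v}v=f(v)$. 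Combining with the identification from the first step yields $J(v)v=f(v)=(I_p\otimes A(V))v$, which is the claim.

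The only subtlety I anticipate is the smoothness of $f$ at $v$: because the generalized heavyside function $h$ is discontinuous at zero, one cannot differentiate through \eqref{eq:asmp2} naively. However, since we evaluate at a fixed $v$ and $\alpha^2$ remains bounded away from zero in a neighborhood of $\alpha=1$, the identity $A(\alpha V)=A(V)$ holds for all $\alpha$ in a neighborhood of $1$, so the scalar derivative $\frac{d}{d\alpha}f(\alpha v)\big|_{\alpha=1}$ exists and equals $f(v)$ under only the mild assumption that $f$ is differentiable at $v$, which is implicit in the very definition of $J(v)$. No other obstacles are expected.
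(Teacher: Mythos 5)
Your proof is correct and follows essentially the same approach as the paper: both compute the directional derivative of $v\mapsto\vect(A(V)V)$ in the radial direction $v$ and use the scaling invariance $A((1+\epsilon)V)=A(V)$ from Assumption~\ref{asmp:scalinv} to conclude. The Euler's-identity framing is just a cleaner packaging of the same limit computation that the paper carries out explicitly.
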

  \begin{proof}
  From \eqref{eq:jaclhs}, we have
  \[
  \Bigg(J(v)-(I_p \otimes A(V))\Bigg)v = \left(\frac{d}{dv}(I_p\otimes A(V))\hat{v}\right)_{\hat{v}=v}v.
  \]
  Interpreting the above as a directional derivative in the direction of $v$, we have
  \begin{equation*}
\begin{split}
 \left(\frac{d}{dv}(I_p\otimes A(V))\hat{v}\right)_{\hat{v}=v}v &= \lim\limits_{\epsilon \to 0} \frac{\Bigg(I_p\otimes B\Big((V+\epsilon V)(V+\epsilon V)^H\Big)-I_p\otimes B\Big(VV^H\Big)\Bigg)v}{\epsilon}\\
    &= \lim\limits_{\epsilon \to 0} \frac{\Bigg(I_p\otimes B\Big(h\left((\epsilon+1)^2VV^H\right)\Big)-I_p\otimes B\Big(VV^H\Big)\Bigg)v}{\epsilon}\\
    &= \lim\limits_{\epsilon \to 0} \frac{\Bigg(I_p\otimes B\Big(VV^H\Big)-I_p\otimes B\Big(VV^H\Big)\Bigg)v}{\epsilon}\\
    &= 0.
\end{split}
\end{equation*}
This completes the proof.
  \end{proof}



\section{Implicit algorithms}\label{sec:impli}
Standard Newton's method for the vectorized form \eqref{eq:main2} is
\begin{equation}\label{eq:newt}
-F^{(k)}=\begin{bmatrix}
J(v^{(k)})-{S^{(k)}}^T\otimes I_n&-I_p\otimes V^{(k)}\\
Z_k&0\\
\end{bmatrix}\begin{bmatrix}
  \Delta v^{(k)}\\
  \Delta s^{(k)}
\end{bmatrix}
\end{equation}
where the $\Delta$-matrices are updates,
$v^{(k+1)}=v^{(k)}+\Delta v^{(k)}$ and
$s^{(k+1)}=s^{(k)}+\Delta s^{(k)}$, with
\begin{equation}\label{eq:yk}
Z_k=\left(\frac{d}{dv}\vect(V^TV)\right)_{V=V^{(k)}}.
\end{equation}
where 
\begin{equation*}
F^{(k)} := F(V^{(k)},S^{(k)}).
\end{equation*}

The iterates $V^{(k+1)}$ computed using \eqref{eq:newt} and \eqref{eq:yk} are not orthogonal matrices. We prefer iterates which are orthogonal
since the solution is orthogonal and it is advisable to work with orthogonal iterates from a robustness perspective. We will now carry out a modification
such that
\begin{equation}\label{eq:norm_iterates}
  {V^{(k)}}^TV^{(k)} = I_p.
\end{equation}
In this first modification of Newton's method,  we use an alternate definition of $Z_k$
\begin{equation}\label{eq:zalt}
Z_k=\frac12\left(\left(\frac{d}{dv}\vect(V^TV)\right)_{V=V^{(k)}}+
\left(\frac{d}{dv}\vect(V^TV)\right)_{V=V^{(k+1)}}\right).
\end{equation}
The reason this implies \eqref{eq:norm_iterates} can be seen as follows.
We first observe the relation
from the product rule 
\begin{equation}\label{eq:WVT_VWT}
 \left(\frac{d}{dv}\vect(V^TV)\right)w=\vect(W^TV+V^TW).
 \end{equation}
 for an arbitrary vector $w$. 
 The last block equation in the update \eqref{eq:newt} now
 reads 
 \begin{equation}  \label{eq:newt_last_eq}   
 Z_k\vect(v^{(k+1)}-v^{(k)})=-(I_p\otimes (V^{(k)})^T)v^{(k)}+i_p
 \end{equation}

 Reversing the vectorization, and using \eqref{eq:WVT_VWT}
 the left hand side can be expanded and simplified to 
\begin{multline*}
 \frac12 \left(
 (V^{(k+1)}-V^{(k)})^TV^{(k)}+(V^{(k)})^T(V^{(k+1)}-V^{(k)})+\right.\\
 \left.
 (V^{(k+1)}-V^{(k)})^TV^{(k+1)}+(V^{(k+1)})^T(V^{(k+1)}-V^{(k)})
 \right) =\\
 (V^{(k+1)})^TV^{(k+1)}-(V^{(k)})^TV^{(k)}.
\end{multline*}
The insertion into \eqref{eq:newt_last_eq} leads to ${V^{(k+1)}}^TV^{(k+1)}=I_p$,
i.e., \eqref{eq:norm_iterates} is satisfied for $k=2,3,\ldots$.

We introduce this first implicit method
in order to maintain orthogonality of the eigenvector
approximations. This is summarized in the
following lemma, which also illustrates
that the behavior is not that different
from the standard Newton method. The proof of statement (b)
is based on theory for Newton-like methods and
can be found in Appendix~\ref{sec:convproof}.

\begin{lemma} \label{thm:newt_mod1}
  Let $v^{(k)}$, $s^{(k)}$, $k=1,\ldots$, be a sequence of vectors that
  satisfy \eqref{eq:newt} with $Z_k$ given by \eqref{eq:zalt}.
  Then,
  \begin{itemize}
  \item[(a)] $(V^{(k)})^TV^{(k)}=I_p$ for $k=2,3,\ldots$
  \item[(b)] If the sequence converges monotonically to a solution $(V_*,S_*)$, and the Jacobian evaluated in $(V_*,S_*)$ (given by \eqref{eq:Jacobian}) is invertible, then it converges with the same convergence order as Newton's method.
  \end{itemize}
\end{lemma}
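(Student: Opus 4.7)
For part (a), the plan is essentially to formalize the calculation already sketched in the paragraph preceding the lemma statement. I would take the last block equation of the Newton update~\eqref{eq:newt}, substitute the modified definition~\eqref{eq:zalt} of $Z_k$, and apply the product-rule identity~\eqref{eq:WVT_VWT} to each of the two Jacobian pieces (one evaluated at $V^{(k)}$ and one at $V^{(k+1)}$), each acting on the common direction $\Delta v^{(k)} = v^{(k+1)} - v^{(k)}$. After devectorizing and multiplying out, the four cross-terms telescope to $(V^{(k+1)})^TV^{(k+1)} - (V^{(k)})^TV^{(k)}$, exactly as computed in the text. Combined with the right-hand side $i_p - \vect((V^{(k)})^TV^{(k)})$, this yields $\vect((V^{(k+1)})^TV^{(k+1)}) = i_p$, i.e., $(V^{(k+1)})^TV^{(k+1)} = I_p$ for all $k \ge 1$. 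So (a) is more a matter of careful bookkeeping than any real idea.

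For part (b), the plan is to view the scheme as a Newton-like (inexact-Newton) iteration, where the system matrix in \eqref{eq:newt} differs from the exact Newton Jacobian~\eqref{eq:Jacobian} only in its last block row: the exact version uses $Z_k$ evaluated at $V^{(k)}$, while the modified version uses the symmetric average of the evaluations at $V^{(k)}$ and $V^{(k+1)}$. I would then invoke a standard convergence theorem for Newton-like methods (e.g.\ in the style of Dennis--Schnabel or Ortega--Rheinboldt): if the iteration matrix $M_k$ satisfies $M_k \to M_*$ with $M_*$ invertible, convergence is at least linear; if moreover $\|M_k - M_*\| = O(\|x^{(k)} - x_*\|)$, convergence is superlinear, and $\|M_k - F'(x^{(k)})\| = O(\|x^{(k)}-x_*\|)$ gives the same order as Newton. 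Since $V^{(k)},V^{(k+1)} \to V_*$ by the monotone-convergence hypothesis, the relevant block of $M_k$ converges to $Z_*$, and by continuity of the second derivatives of $V \mapsto V^TV$ (which is in fact quadratic, hence trivially smooth) the difference between the modified and exact Jacobian is controlled by $\|V^{(k+1)} - V^{(k)}\|$, which in turn is $O(\|V^{(k)}-V_*\|)$ along a convergent sequence. That yields the same convergence order as Newton.

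The only genuinely delicate point is that the scheme is \emph{implicit}: $V^{(k+1)}$ appears on both sides of \eqref{eq:newt} via $Z_k$ in \eqref{eq:zalt}. To apply a Newton-like convergence theorem I need to know that the step is well-defined for $V^{(k)}$ close enough to $V_*$. The natural way to handle this is to view \eqref{eq:newt} (with $Z_k$ from \eqref{eq:zalt}) as a nonlinear equation for $(v^{(k+1)}, s^{(k+1)})$ and apply the implicit function theorem at $(V_*,S_*)$, using invertibility of the Jacobian~\eqref{eq:Jacobian} together with the fact that at the fixed point the two evaluations of $\frac{d}{dv}\vect(V^TV)$ coincide. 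Once solvability and uniqueness of the implicit step in a neighborhood are established, the step can be written explicitly as $x^{(k+1)} = x^{(k)} - M_k^{-1}F^{(k)}$ with $M_k$ depending smoothly on $x^{(k)}$, and the standard theory applies. The remaining work is the routine estimate $\|M_k - F'(x_*)\| = O(\|x^{(k)}-x_*\|)$, which I would defer to the appendix as the paper does.
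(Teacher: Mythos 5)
Your proposal is essentially correct and follows the same overall strategy as the paper: part~(a) is a bookkeeping verification of the telescoping identity, and part~(b) treats the scheme as a perturbed Newton method and bounds the perturbation by $\mathcal{O}(\|x^{(k)}-x_*\|)$, using the monotone-convergence hypothesis to control the step size $\|\Delta x^{(k)}\|$ in terms of $\|x^{(k)}-x_*\|$. The only cosmetic difference is the choice of formalization: the paper works in the residual-based \emph{inexact Newton} framework of Dembo, Eisenstat and Steihaug (writing $F'_k \Delta x_k = -F_k + r_k$, bounding $\|r_k\| = \mathcal{O}(\|F_k\|^2)$ via the implicit function theorem, and invoking their Theorem~3.3), whereas you phrase it in the equivalent \emph{Newton-like} (quasi-Newton) language of a perturbed system matrix $M_k$. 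These are interchangeable through the identity $r_k = (F'_k - M_k)\Delta x_k$, and both land on the same $\mathcal{O}(\|x^{(k)}-x_*\|)$ estimate. Your extra paragraph about well-posedness of the implicit step via the implicit function theorem is reasonable but unnecessary here: the lemma hypothesizes that a sequence satisfying \eqref{eq:newt} with \eqref{eq:zalt} exists, so existence and uniqueness of the step are not part of what must be proved.
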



In this work we consider two modifications of the Newton-like method
in Lemma~\ref{thm:newt_mod1}. Both lead either to new methods (which
have some attractive properties) or well-established methods
suggesting that the methods can be viewed as Newton-like methods.

\begin{itemize}
  \item 
We modify the (1,2) block of the Jacobian to $I_p\otimes V^{(k+1)}$.
\begin{equation}\label{eq:qn1}
-F^{(k)}=\begin{bmatrix}
J(v^{(k)})-{S^{(k)}}^T\otimes I_n&-I_p\otimes V^{(k+1)}\\
Z_k&0\\
\end{bmatrix}\begin{bmatrix}
  \Delta v^{(k)}\\
  \Delta s^{(k)}
\end{bmatrix}
\end{equation}
This is analogous to the modification \cite[Equation (1.10)]{Jarlebring:2017:QUASINEWTON} which directly leads to the method of successive linear problems \cite{Ruhe:1973:NLEVP}. With the techniques of the next section, this leads to Algorithm~\ref{alg:jversion}.
\item We modify the (1,1) block of the Newton's method Jacobian from $J(v^{(k)})$ to $I_p\otimes A(V^{k})$, in addition to the modification done to obtain Algorithm~\ref{alg:jversion}.
\begin{equation}\label{eq:qn2}
-F^{(k)}=\begin{bmatrix}
I_p\otimes A(V^{(k)})-{S^{(k)}}^T\otimes I_n&-I_p\otimes V^{(k+1)}\\
Z_k&0\\
\end{bmatrix}\begin{bmatrix}
  \Delta v^{(k)}\\
  \Delta s^{(k)}
\end{bmatrix}
\end{equation}
 We will show that this leads to Algorithm~\ref{alg:aversion}, which is the well known SCF iteration.
\end{itemize}

\section{Explicit interpretation of algorithms}\label{sec:expli}
%
%
%
%
%
Both update equations \eqref{eq:qn1} and \eqref{eq:qn2} correspond to implicit methods.
We will illustrate several situations where we can generate iterates that satisfy
the implicit algorithms update equations in an explicit way.

Equations \eqref{eq:qn1} and \eqref{eq:qn2} have to be expanded to derive usable algorithms, starting
with the latter variant since it leads to a clear relationship with state-of-the-art methods.

\subsection{Algorithm \ref{alg:aversion}}
Although, Algorithm~\ref{alg:aversion} is a modification
of Algorithm~\ref{alg:jversion}, we start our discussion
with Algorithm~\ref{alg:aversion} since it leads to a well-established method.
We can obtain Algorithm~\ref{alg:aversion} can be obtained from \eqref{eq:qn2} by multiplying out the first subequation of \eqref{eq:qn2} as follows.
\begin{equation*}
\begin{split}
&\Big((I_p\otimes A(V^{(k)}))-({S^{(k)}}^T\otimes I_n)\Big)(v^{(k+1)}-v^{(k)})-(I_p\otimes V^{(k+1)})(s^{(k+1)}-s^{(k)}) \\
&=\\ & -\Big((I_p\otimes A(V^{(k)}))-({S^{(k)}}^T\otimes I_n)\Big)v^{(k)}
\end{split}
\end{equation*}
Cancellation of terms leads to
\begin{equation}\label{eq:scf_non_vectorized}
\Big(I_p\otimes A(V^{(k)})\Big)v^{(k+1)} = (S^{(k+1)}\otimes I_n)v^{(k+1)}.
\end{equation}
Devectorizing this system gives the following result.

\begin{theorem}\label{thm:qn2appl}
  Let $(V^{(k)},S^{(k)})$ be pairs that satisfy the
  update equation \eqref{eq:qn2} for $k=1,\ldots$. Then, 
\begin{equation}\label{eq:scf}
 A(V^{(k)})V^{(k+1)} = V^{(k+1)}S^{(k+1)},
\end{equation}
where $(V^{(k)})^TV^{(k)}=I_p$ for $k=2,\ldots$.
\end{theorem}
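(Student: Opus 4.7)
The proof is essentially a direct algebraic manipulation of the update equation \eqref{eq:qn2}, and the calculation already appears in the discussion preceding the theorem statement. My plan is therefore to write this out cleanly in two stages: first derive the eigenvalue-like identity \eqref{eq:scf} from the top block row of \eqref{eq:qn2}, and second obtain orthogonality of the iterates from the bottom block row.

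For the first stage, I would take the top block equation of \eqref{eq:qn2}, substitute $\Delta v^{(k)} = v^{(k+1)}-v^{(k)}$ and $\Delta s^{(k)} = s^{(k+1)}-s^{(k)}$, and expand both sides. The right-hand side is $-F^{(k)}$ restricted to its first block, namely $-(I_p \otimes A(V^{(k)}))v^{(k)} + (S^{(k)T}\otimes I_n)v^{(k)}$ (using the identity in \eqref{eq:main2} that rewrites $(I_p\otimes V^{(k)})s^{(k)}$ as $(S^{(k)T}\otimes I_n)v^{(k)}$). On the left-hand side, the terms $(I_p\otimes A(V^{(k)}))v^{(k)}$ and $(S^{(k)T}\otimes I_n)v^{(k)}$ cancel against the right-hand side, and the $(I_p\otimes V^{(k+1)})s^{(k)}$ term combines with $-(I_p\otimes V^{(k+1)})s^{(k+1)}$ and can be rewritten via the $\vect$-identity $(I_p\otimes V^{(k+1)})s^{(k+1)} = (S^{(k+1)T}\otimes I_n)v^{(k+1)}$. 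What remains is exactly
\[
(I_p \otimes A(V^{(k)}))v^{(k+1)} = (S^{(k+1)T}\otimes I_n)v^{(k+1)},
\]
and devectorizing both sides (using $\vect(AXB) = (B^T\otimes A)\vect(X)$) yields \eqref{eq:scf}.

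For the second stage, I observe that the bottom block row of \eqref{eq:qn2} is identical to the bottom block row of the Newton-like system studied in Lemma~\ref{thm:newt_mod1}, with the same $Z_k$ from \eqref{eq:zalt}. Hence the derivation between \eqref{eq:WVT_VWT} and \eqref{eq:newt_last_eq} carries over verbatim, producing the telescoping identity $(V^{(k+1)})^T V^{(k+1)} - (V^{(k)})^T V^{(k)} = 0$, and therefore $(V^{(k)})^T V^{(k)} = I_p$ for $k \geq 2$. I can either reproduce this or simply cite Lemma~\ref{thm:newt_mod1}(a) directly.

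I do not expect a genuine obstacle here: the only subtlety is making sure the bookkeeping of the cancellations in the first stage is unambiguous, in particular the rewriting between $(I_p\otimes V)s$ and $(S^T\otimes I_n)v$, which is already used in \eqref{eq:main2}. Consequently the proof will be short and will essentially consist of pointing to the computation in \eqref{eq:scf_non_vectorized} and invoking Lemma~\ref{thm:newt_mod1}(a).
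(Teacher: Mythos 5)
Your proposal is correct and matches the paper's approach exactly: the paper obtains \eqref{eq:scf} by precisely the expansion and cancellation you describe (the computation preceding the theorem, culminating in \eqref{eq:scf_non_vectorized} and devectorization), and the orthogonality of the iterates follows because the bottom block row of \eqref{eq:qn2} is unchanged from \eqref{eq:newt} with $Z_k$ as in \eqref{eq:zalt}, so Lemma~\ref{thm:newt_mod1}(a) applies directly. No gaps; the bookkeeping you flagged (the interchange between $(I_p\otimes V)s$ and $(S^T\otimes I_n)v$) is indeed the only point requiring care, and you handle it correctly.
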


This leads to a practical way to carry out the algorithm, since \eqref{eq:scf} is
an eigenvalue problem where $V^{(k+1)}$ are the eigenvectors and the diagonal matrix $S^{(k+1)}$
are the eigenvalues. This is the well-known SCF algorithm.

%

\subsection{Algorithm~\ref{alg:jversion}}
Similarly, Algorithm~\ref{alg:jversion} can be obtained from the first subequation in \eqref{eq:qn1} as follows.
\begin{equation}\label{eq:qn11}
\begin{split}
&
\bigg(J(v^{k})-({S^{(k)}}^T\otimes I_n)\bigg)({v}^{(k+1)}-v^{(k)}) \\ &= (I_p\kron {V}^{(k+1)})(s^{(k+1)}-s^{(k)})+({S^{(k)}}^{T}\otimes I_n)v^{(k)}-(I_p\otimes A(V^{(k)}))v^{(k)}\\
&= \bigg(\left({S^{(k+1)}}^T-{s^{(k)}}^T\right)\otimes I_n\bigg)v^{(k+1)}+({S^{(k)}}^{T}\otimes I_n)v^{(k)}-(I_p\otimes A(V^{(k)}))v^{(k)}.
\end{split}
\end{equation}
From Theorem~\ref{thm:eigeqv}, we have $J(v^{(k)})v^{(k)} = \left(I_p\otimes A(V^{k})\right)v^{(k)}$. Using this in \eqref{eq:qn11} leads to the following result.
\begin{theorem}\label{thm:qn1appl}
  Let $(V^{(k)},S^{(k)})$ be pairs that satisfy the
  update equation \eqref{eq:qn1} for $k=1,\ldots$. Then, 
\begin{equation}\label{eq:qn1finale}
J(v^{(k)}){v}^{k+1} = ({S^{(k+1)}}^T\kron I_n)v^{(k+1)}.
\end{equation}
\end{theorem}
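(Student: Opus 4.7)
The plan is to pick up where the excerpt leaves off at \eqref{eq:qn11} and finish the derivation by two observations: a standard vectorization identity that converts $(I_p\otimes V^{(k+1)}) s$ into $(S^T\otimes I_n) v^{(k+1)}$, and the eigenproblem equivalence from Theorem~\ref{thm:eigeqv}. The statement to prove is essentially an algebraic consequence of the update equation \eqref{eq:qn1}, so the whole proof is a sequence of cancellations with no real analytic content.

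First I would expand the left-hand side of \eqref{eq:qn11} by distributivity, writing
\[
(J(v^{(k)})-{S^{(k)}}^T\otimes I_n)(v^{(k+1)}-v^{(k)})
= J(v^{(k)})v^{(k+1)} - J(v^{(k)})v^{(k)} - ({S^{(k)}}^T\otimes I_n)v^{(k+1)} + ({S^{(k)}}^T\otimes I_n)v^{(k)}.
\]
On the right-hand side of \eqref{eq:qn11}, the expression $\bigl(({S^{(k+1)}}^T-{S^{(k)}}^T)\otimes I_n\bigr)v^{(k+1)}$ is already obtained from $(I_p\otimes V^{(k+1)})(s^{(k+1)}-s^{(k)})$ via the identity $\operatorname{vec}(V X) = (X^T\otimes I_n)v = (I_p\otimes V)x$, so this step is already in hand.

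Next, I would invoke Theorem~\ref{thm:eigeqv}, which gives $J(v^{(k)})v^{(k)} = (I_p\otimes A(V^{(k)}))v^{(k)}$. Substituting this into the expanded left-hand side kills the term $-(I_p\otimes A(V^{(k)}))v^{(k)}$ appearing on the right. At the same time, the terms $({S^{(k)}}^T\otimes I_n)v^{(k)}$ and $-({S^{(k)}}^T\otimes I_n)v^{(k+1)}$ cancel between the two sides. What remains is exactly
\[
J(v^{(k)})v^{(k+1)} = ({S^{(k+1)}}^T\otimes I_n)v^{(k+1)},
\]
which is \eqref{eq:qn1finale}.

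There is no real obstacle here; the only thing to be careful about is that the cancellation is driven by Theorem~\ref{thm:eigeqv}, which in turn rests on Assumption~\ref{asmp:scalinv}. In particular, without the basis invariance property the term $J(v^{(k)})v^{(k)}$ would carry an extra directional-derivative contribution and the simplification would fail, so the theorem implicitly relies on Assumption~\ref{asmp:scalinv} through Theorem~\ref{thm:eigeqv}.
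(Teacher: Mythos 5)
Your proof is correct and follows exactly the paper's own derivation: expand \eqref{eq:qn11}, use the vectorization identity to rewrite $(I_p\otimes V^{(k+1)})\Delta s^{(k)}$ as $((S^{(k+1)}-S^{(k)})^T\otimes I_n)v^{(k+1)}$, substitute $J(v^{(k)})v^{(k)}=(I_p\otimes A(V^{(k)}))v^{(k)}$ from Theorem~\ref{thm:eigeqv}, and cancel. The closing remark that the cancellation ultimately rests on Assumption~\ref{asmp:scalinv} via Theorem~\ref{thm:eigeqv} is accurate and a worthwhile clarification, though it is commentary rather than a different argument.
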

Since $J$ is not block diagonal, \eqref{eq:qn1finale} cannot be easily devectorized as was done for \eqref{eq:scf_non_vectorized}.
For the special case $p=1$ we directly
identify that \eqref{eq:qn1finale} reduces to 
\begin{equation}\label{eq:jscf}
J(v^{(k)})v^{(k+1)} = \lambda^{(k+1)} v^{(k+1)}.
\end{equation}
Similar to \eqref{eq:scf}, \eqref{eq:jscf} is a standard eigenvalue problem
and we can compute a next iterate with a solver for standard eigenvalue problem.
It directly suggests that the matrix $A(V^{(k)})$ in the SCF-iteration, can be viewed
as an approximation of the Jacobian matrix, and in order to obtain
faster convergence it can be better to use $J(v^{(k)})$, or approximations
thereof.
This in turn leads to quadratic convergence and in contrast to
Newton's method, the method converges in one step for a linear problem,
and is superior to Newton's method for problems that are close to being linear in this sense (as we prove in Section~\ref{subsec:singlestep}).

\subsection{Implementation aspects}\label{sec:implidet}
In both the implicit algorithms, the first step in the loop can be done with standard tools for solving linear eigenproblems \cite{Bai:2000:TEMPLATES}. The iterates of both algorithms will depend on which $p$ eigenvectors are selected to construct $V^{(k+1)}$. The best choice is usually application dependent. For example, in applications based on quantum mechanics, one is usually interested in the $p$ first eigenvalues and hence, the eigenvectors corresponding to the $p$ first eigenvalues would be selected in each step. 

Although Theorem~\ref{thm:qn2appl} and Theorem~\ref{thm:qn1appl} provide us with explicit ways to implement our implicitly formulated methods, they do not automatically enforce the orthogonality constraint ${V^{(k+1)}}^T V^{(k+1)} = I_p$. To this end, we compute an intermediate eigenvector eigenpair $\left(Y,Z\right)$ and add an additional step in our algorithms that involves computing a thin QR factorization of $Z$ to obtain $V^{(k+1)}$. This improves the numerical stability of our implementation.

\begin{algorithm}[h]
\SetAlgoLined
\SetKwInOut{Input}{input}
\SetKwInOut{Output}{output}
\Input{$V^{(0)} \in \mathbb{R}^{n\times p}$ such that ${V^{(0)}}^TV^{(0)} = I$, Tolerance TOL}
\Output{$(V_*, S_*)\in\mathbb{R}^{n\times p}\times\mathbb{R}^{p\times p}$ that satisfies \eqref{eq:main}}
 
 \For{$k = 0,1,\ldots,$ until convergence}{
   Find $\left(Y,Z\right)$ such that \[J(v^{(k)})y = (Z^T\kron I_n)y.\]\\
   Compute thin QR factorizaton of $Y$ to get $V^{(k+1)}$. \[Y = V^{(k+1)}R.\]\\
  Compute a similarity transformation of Z: $S^{(k+1)} = R^{-1}ZR.$
 }
\caption{J-version \label{alg:jversion}}
\end{algorithm}

\begin{algorithm}[h]
\SetAlgoLined
\SetKwInOut{Input}{input}
\SetKwInOut{Output}{output}
\Input{$V^{(0)} \in \mathbb{R}^{n\times p}$ such that ${V^{(0)}}^TV^{(0)} = I$, Tolerance TOL}
\Output{$(V_*, S_*)\in\mathbb{R}^{n\times p}\times\mathbb{R}^{p\times p}$ that satisfies \eqref{eq:main}}
 
 \For{$k = 0,1,\ldots,$ until convergence}{
   Find $\left(Y,Z\right)$ such that \[A(V^{(k)})Y = YZ.\]\\
   Compute thin QR factorizaton of $Y$ to get $V^{(k+1)}$. \[Y = V^{(k+1)}R.\]\\
   Compute a similarity transformation of Z: $S^{(k+1)} = R^{-1}ZR.$\\
 }
 \caption{A-version \label{alg:aversion}}
\end{algorithm}

Note that the final step in both algorithms need not be done for every iteration of the loop since we do not use $S^{(k+1)}$ anywhere inside it. We can instead compute the transformation only once after the termination of the loop.

We do not provide an explicit procedure for \eqref{eq:qn1finale} for
$p>1$, making the procedure somewhat theoretical for that case.
There are very important applications for the $p=1$ case, and we
illustrate in the simulations that if we can solve \eqref{eq:qn1finale} corresponding to $p>1$ in some way (possibly approximately)
we obtain attractive convergence properties.
\section{Convergence theory}\label{sec:convtheo}
\subsection{Local convergence of Algorithm 2}
Since Algorithm~\ref{alg:aversion} is equivalent to SCF as shown in Theorem~\ref{thm:qn2appl}, the convergence can be described in the setting of SCF.
There has been extensive study of convergence of SCF and its acceleration in the last fifty years. Several results exist in the literature, as mentioned in Section~\ref{sec:intro}. In general, SCF exhibits linear local convergence when it converges.
Convergence can be characterized in terms of gaps \cite{Yang:2009:SCF}
(see also \cite[Theorem~3.1]{ParikElias:2018:DENSITY}). Recent gloval
convergence are available, e.g., in \cite{liu2015analysis}, \cite{liu2014convergence}.
\subsection{Local convergence of Algorithm 1}
Due to our inexact Newton viewpoint, the convergence of Algorithm~\ref{alg:jversion} can be characterized using results in the rich literature on inexact Newton methods. Quadratic local convergence can be proved using theorems in \cite{Dembo:1982:INEXACT}.
\begin{theorem} \label{thm:qn1_conv}
  Let $(V^{(k)}$, $S^{(k)})$, $k=1,\ldots$, be a sequence of pairs that satisfies 
  satisfy \eqref{eq:qn1finale} with the normalization constraint ${V^{(k)}}^T V^{(k)} = I_p$.
  If the sequence converges monotonically to a solution $(V_*,S_*)$, and the Jacobian evaluated in $(V_*,S_*)$ (given by \eqref{eq:Jacobian}) is invertible, then it converges with the same convergence order as Newton's method.
\end{theorem}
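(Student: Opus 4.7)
The plan is to recast the implicit update \eqref{eq:qn1} as an inexact Newton iteration for $F$ in \eqref{eq:main2} and then invoke the convergence theory of \cite{Dembo:1982:INEXACT}. The exact Newton Jacobian at $(v^{(k)}, s^{(k)})$ has $-I_p \kron V^{(k)}$ in its (1,2) block, whereas \eqref{eq:qn1} uses $-I_p \kron V^{(k+1)}$. Writing $V^{(k+1)} = V^{(k)} + \Delta V^{(k)}$, a short algebraic manipulation shows that solving \eqref{eq:qn1} is equivalent to solving the exact Newton system with a perturbation $r^{(k)} = (I_p \kron \Delta V^{(k)})\Delta s^{(k)}$ added to the right-hand side.

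First I would formalize this identification and use the assumed invertibility of the Jacobian \eqref{eq:Jacobian} at $(V_*, S_*)$, together with continuity, to obtain a uniform bound $\|(\Delta v^{(k)}, \Delta s^{(k)})\| \le C\|F^{(k)}\|$ for $k$ large enough. Since $r^{(k)}$ is bilinear in the update, this immediately gives $\|r^{(k)}\| = O(\|F^{(k)}\|^2)$, i.e., a forcing term $\eta_k := \|r^{(k)}\|/\|F^{(k)}\|$ satisfying $\eta_k = O(\|F^{(k)}\|)$. The main theorem of \cite{Dembo:1982:INEXACT} then yields quadratic local convergence, matching the order of exact Newton.

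The orthogonality constraint ${V^{(k)}}^T V^{(k)} = I_p$ fits naturally into this framework because it is encoded in the second block-row of $F$, and the symmetric choice of $Z_k$ established in Lemma~\ref{thm:newt_mod1}(a) is exactly what keeps the iterates on the Stiefel manifold along the sequence; no separate treatment of the manifold structure is required once we work with $F$ directly.

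The hard part, which I would relegate to Appendix~\ref{sec:convproof} alongside the proof of Lemma~\ref{thm:newt_mod1}(b), is the careful localization argument guaranteeing invertibility of the modified coefficient matrix in \eqref{eq:qn1} near the solution. Specifically, one must show that invertibility of \eqref{eq:Jacobian} at $(V_*, S_*)$ is inherited by the perturbed system whose (1,2) block involves $V^{(k+1)}$ instead of $V^{(k)}$. This follows from a standard perturbation-of-inverse estimate together with $\|V^{(k+1)} - V_*\| \to 0$ supplied by the monotonic convergence hypothesis. Once this is in place, the bound $\|(\Delta v^{(k)}, \Delta s^{(k)})\| \le C\|F^{(k)}\|$ used above is justified, and the Dembo--Eisenstat--Steihaug machinery closes the argument.
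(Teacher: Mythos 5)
Your proposal follows the same overall route as the paper: interpret \eqref{eq:qn1} as an inexact Newton step for $F$ in \eqref{eq:main2}, bound the inexact-Newton residual by $\mathcal{O}(\|F^{(k)}\|^2)$, and invoke Dembo--Eisenstat--Steihaug (\cite[Theorem~3.3]{Dembo:1982:INEXACT}) to conclude quadratic order. So the core idea is correct.

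There is, however, a gap in your residual identification. The update \eqref{eq:qn1} differs from the exact Newton system at $(v^{(k)},s^{(k)})$ in \emph{two} blocks, not one: the $(1,2)$ block ($-I_p\kron V^{(k+1)}$ versus $-I_p\kron V^{(k)}$) \emph{and} the $(2,1)$ block, where the averaged $Z_k$ from \eqref{eq:zalt} replaces $C_k := \bigl(\tfrac{d}{dv}\vect(V^TV)\bigr)_{V=V^{(k)}}$. Consequently
\[
r_k \;=\; \begin{bmatrix} (I_p\kron\Delta V^{(k)})\,\Delta s^{(k)} \\ (C_k - Z_k)\,\Delta v^{(k)} \end{bmatrix},
\]
and your stated formula $r^{(k)} = (I_p\kron\Delta V^{(k)})\Delta s^{(k)}$ silently drops the second component. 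Your remark that Lemma~\ref{thm:newt_mod1}(a) keeps iterates orthogonal does \emph{not} dispose of this term: preserving the constraint is a property of the generated iterates, whereas the Dembo--Eisenstat--Steihaug residual is measured against the Newton operator with $C_k$, so the discrepancy $C_k - Z_k$ must still appear in $r_k$. Fortunately $C_k - Z_k = \tfrac12(C_k - C_{k+1}) = \mathcal{O}(\|\Delta V^{(k)}\|)$ by smoothness of $\tfrac{d}{dv}\vect(V^TV)$, so $(C_k - Z_k)\Delta v^{(k)} = \mathcal{O}(\|\Delta V^{(k)}\|^2)$ and the conclusion is unharmed --- but you need to say this. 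This is exactly what the paper's proof carries explicitly via the matrix $\begin{bmatrix}0 & \tilde B_k - B_k \\ \tilde C_k - C_k & 0\end{bmatrix}$.

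A secondary, non-critical difference: you obtain $\|(\Delta v^{(k)},\Delta s^{(k)})\|\le C\|F^{(k)}\|$ by localizing the invertibility of the modified coefficient matrix, whereas the paper avoids that extra argument by using the assumed monotonic convergence to get $\|\Delta\|\le 2\|(v^{(k)}-v_*,\,s^{(k)}-s_*)\|$ and then applying the implicit function theorem to relate $\|(v^{(k)}-v_*,\,s^{(k)}-s_*)\|$ to $\|F^{(k)}\|$. Both routes are legitimate; the paper's is slightly more economical because the invertibility of the \emph{perturbed} system is never needed, only the invertibility of the exact Jacobian at the solution, which is the hypothesis of the theorem.
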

We refer the reader to Appendix~\ref{sec:convproof} for the proof. 
\subsection{Single step analysis}\label {subsec:singlestep}
As we illustrate in the examples, the implicit methods
tend seem to often work considerably better in general
and in particular for close-to-linear problems. This is intuitively
natural since both implicit methods converge
in one step if we apply it to linear problems.

This can be further characterized, by considering one
step of the method applied to a problem parameterized
by a parameter $\alpha$, where $\alpha=0$ corresponds to
a linear problem. For this analysis we consider the model problem
\begin{equation*}
 A(v)=A_0+\alpha C(v).
\end{equation*}

Let $\begin{bmatrix}v_0,&\lambda_0\end{bmatrix}^T$ be an initial guess and $\begin{bmatrix}v_+,&\lambda_+\end{bmatrix}^T$ be the result of (for the moment)
any of the two algorithms.
We introduce three functions
\[
G_\beta\left(\begin{bmatrix}
\lambda \\ v\\\alpha
\end{bmatrix}\right)=\begin{bmatrix}
(A_0+\alpha P_\beta)v-\lambda v\\
v^Tv-1
\end{bmatrix}
\]
where $\beta$ can be  $\beta=*$, $\beta=A$ and $\beta=J$.
The values of $P$ (where we dropped the parameters
for notational convenience) denote the nonlinearity
\begin{subequations}
\begin{eqnarray}
  P_*&=& C(v_*)\\
  P_A&=& C(v_0)\\
  P_J&=& \left(\frac{d}{dv}\left(C(v)v\right)\right)_{v=v_0}.
\end{eqnarray}
\end{subequations}
These nonlinear functions respectively corresponds the residual
for
the exact solution ($\beta=*$), one step of Algorithm~\ref{alg:jversion} ($\beta=J$) and one step of Algorithm~\ref{alg:aversion} ($\beta=A$). 
%

We can apply the implicit function
theorem for all three functions, and express
the first $n+1$ variables in terms of the third
variable $\alpha$ in a neighborhood of the solution,
if the associated Jacobian is non-singular.
The Jacobian given \eqref{eq:Jacobian}
is now assumed to be non-singular in the solution. 
The exact solution can then be expanded as
\begin{equation}\label{eq:single1}
\begin{bmatrix}
  v_*(\alpha)\\
  \lambda_*(\alpha)
\end{bmatrix}=
\begin{bmatrix}
  v_*(0)\\
  \lambda_*(0)
\end{bmatrix}
-\alpha \begin{bmatrix}
 A_0-\lambda_*(0)I &-v  \\
 2v^T &     \\
\end{bmatrix}^{-1}\begin{bmatrix}
  C(v)v\\
  0
\end{bmatrix}
+\mathcal{O}(\alpha^2)
\end{equation}
whereas both $\beta=A$ and $\beta=J$ can be expanded
as 
\begin{equation}\label{eq:single2}
\begin{bmatrix}
  v_+(\alpha)\\
  \lambda_+(\alpha)
\end{bmatrix}=
\begin{bmatrix}
  v_+(0)\\
  \lambda_+(0)
\end{bmatrix}
-\alpha \begin{bmatrix}
 A_0-\lambda_*(0)I &-v  \\
 2v^T &     \\
\end{bmatrix}^{-1}\begin{bmatrix}
  c_\beta\\
  0
\end{bmatrix}
+\mathcal{O}(\alpha^2)
\end{equation}
where $c_A=P_av_*(0)$ and $c_J=P_jv_*(0)$.

The first terms in the Taylor expansion of
the next iterate and the exact iterate
as a function of the parameterization of the
nonlinearity, are equal.
Therefore,
\[
 \begin{bmatrix}
  v_+(\alpha)\\
  \lambda_+(\alpha)
\end{bmatrix}-\begin{bmatrix}
  v_*(\alpha)\\
  \lambda_*(\alpha)
\end{bmatrix}=\mathcal{O}(\alpha)
\]
meaning that the accuracy of one step, is order
of magnitude of the nonlinear term. Moreover,
the coefficient is proportional to
$\|C(v_*(0))v_*(0)-P_\beta v_*(0)\|$.

\section{Simulations}\label{sec:simul}
\subsection{Scalar nonlinearity}
The theory and methods are first illustrated with a reproducible example where $p = 1$. We consider \eqref{eq:main-p1} with
\begin{equation}\label{eq:exp1}
A(v) = A_0+\alpha\sin\left(\frac{v^TA_2v}{v^Tv}\right)A_1
\end{equation}
and
\begin{eqnarray*}
A_0 &=& \frac{1}{10}\begin{pmatrix}10& 21& 13& 16\\21& -26& 24& 2\\13& 24& -26& 37\\16& 2& 37& -4\end{pmatrix},\quad A_1 = \frac{1}{10}\begin{pmatrix}20& 28& 12& 32\\ 28& 4& 14& 6\\12& 14& 32& 34\\ 32& 6& 34& 16 \end{pmatrix},\\
A_2 &=& \frac{1}{10}\begin{pmatrix}-14& 16& -4& 15\\ 16& 10& 15& -9\\ -4& 15& 16& 6\\15& -9& 6&-6 \end{pmatrix}
\end{eqnarray*}
and $\alpha \in \mathbb{R}$. Note that $A$ in \eqref{eq:exp1} satisfies Assumption~\ref{asmp:scalinv} if we select 
\[
B(X) = A_0+\alpha \sin\left(\frac{c^TXBXc}{c^TXc}\right)A_1
\]
for essentially any $c\in\mathbb{R}^n$. This specific example appears in \cite[Section~3.3]{Jarlebring:2014:Inverse} and $J$ is explicitly given by
\begin{equation*}
J(v) = A(v)+2\alpha\frac{\cos(\frac{v^TA_2v}{v^Tv})}{{(v^Tv)}^2}A_1v((v^Tv)v^TA_2-(v^TA_2v)v^T).
\end{equation*}

We solve four instances of this problem generated by four different values of $\alpha$, that is $\alpha = 0,0.5,1$ and $5$. To all of these instances, we apply Algorithm~1 (using \eqref{eq:jscf}), Algorithm~2, the J-Inverse iteration (from \cite{Jarlebring:2014:Inverse}) and Newton's method with initial guess $v_0 = \begin{pmatrix}1,&1,&1,&1\end{pmatrix}^T$. In Figure~\ref{fig:ex1}, we see the error history of all three methods for all four values of $\alpha$. The error is computed as $\|v_k-v_*\|$, where $v_*$ is the reference solution.

 We observe linear convergence for Algorithm 2 and quadratic convergence for Algorithm 1 as predicted by the theory in Section~\ref{sec:convtheo}. Both implicit methods are competitive, at least for small values of $\alpha$. For higher values of $\alpha$, the number of iterations required to enter the regime of quadratic convergene increases for both Algorithm 1 and Newton's method. This example illustrates a simple case when Algorithm 1 is a better choice than Newton's method, although both methods converge quadratically. We observe linear convergence for J-Inverse iteration as predicted by \cite[Theorem~3.1]{Jarlebring:2014:Inverse}.

 In Figure~\ref{fig:onestep}, we visualize the implications of the theory in Section~\ref{subsec:singlestep} by plotting the single step errors for all three methods. It is clear that the single step error is linear in $\alpha$, as expected from \eqref{eq:single1} and \eqref{eq:single2}. The predicted line is plotted using the coefficent $\|C(v_*(0))v_*(0)-P_\beta v_*(0)\|$. This illustrates an advantage of the proposed methods for small $\alpha$. 

\begin{figure}[htbp!]
     \subfigure{\label{fig:ex11}\includegraphics[width=60mm]{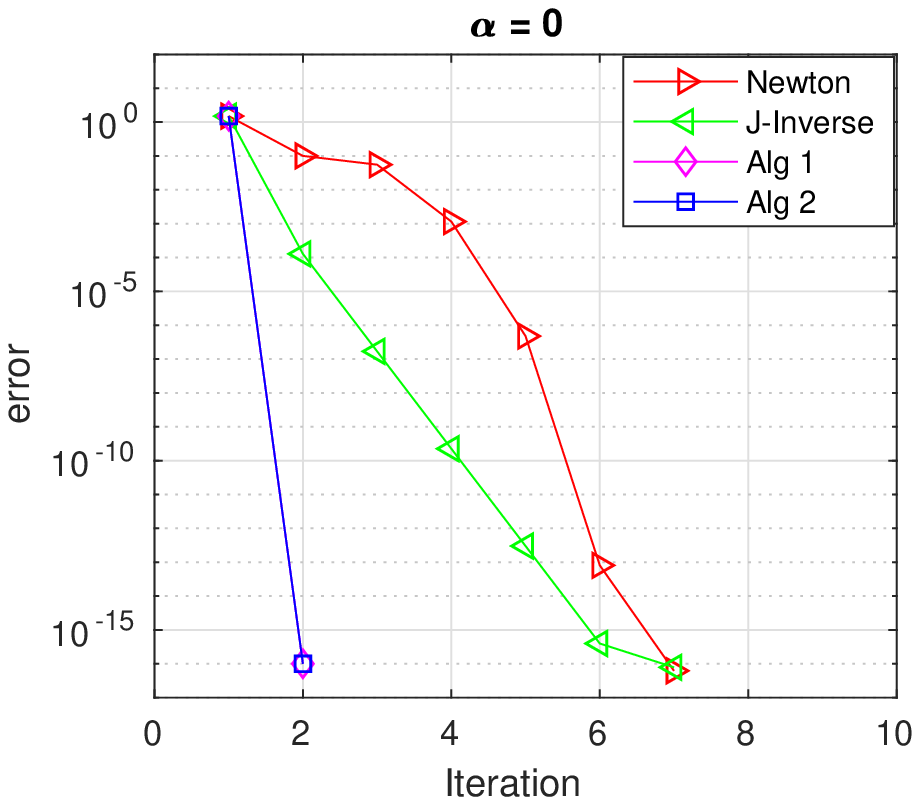}}
    \subfigure{\label{fig:ex12}\includegraphics[width=60mm]{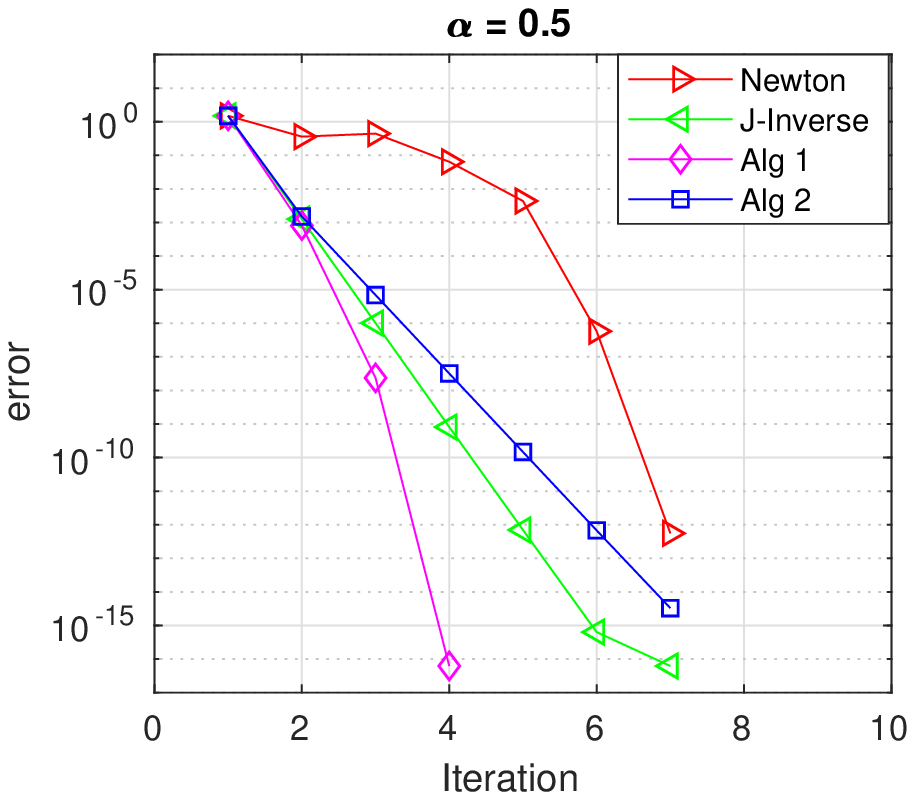}}
     \subfigure{\label{fig:ex13}\includegraphics[width=60mm]{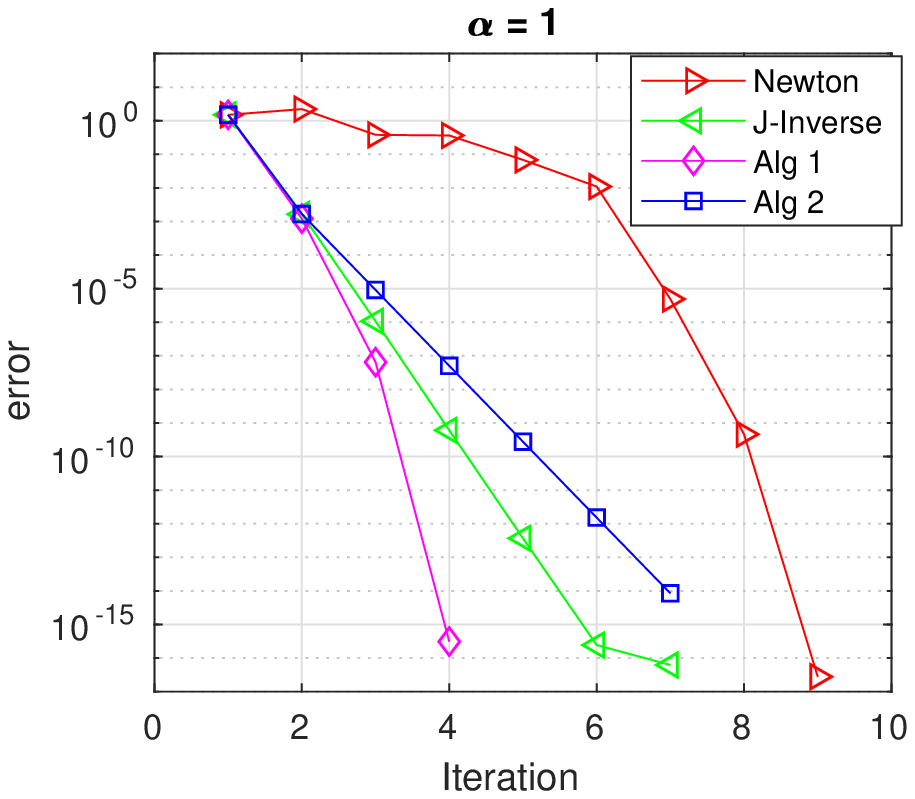}}
     \subfigure{\label{fig:ex14}\includegraphics[width=60mm]{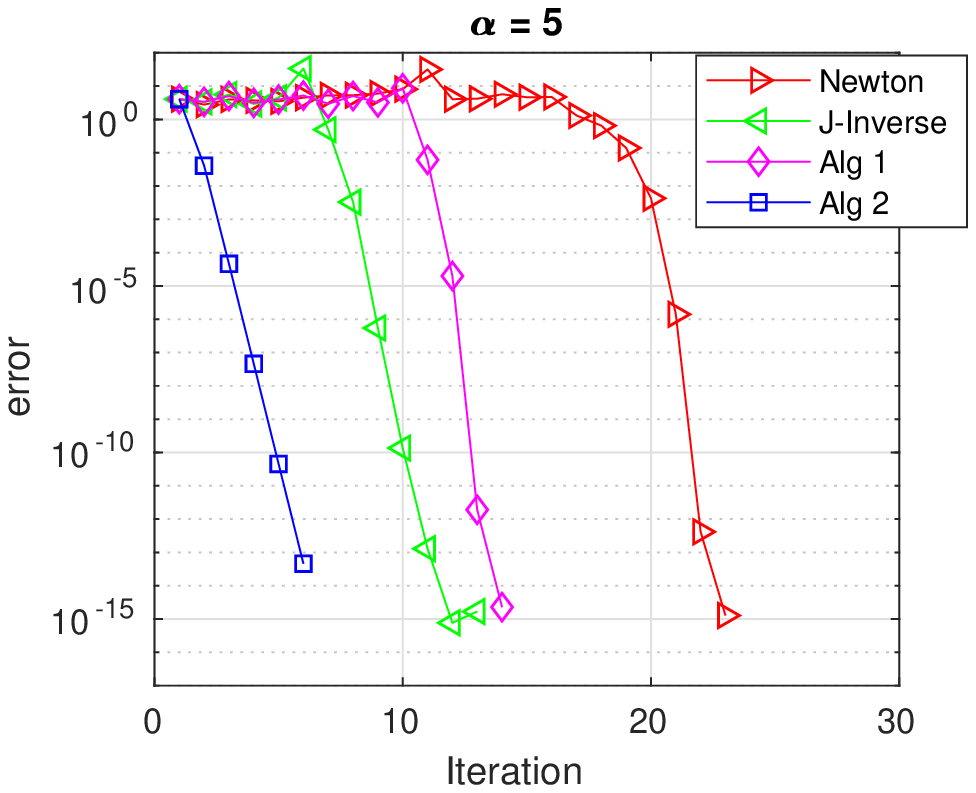}}
      
    \caption{Algortihm~1, Algorithm~2, J-Inverse iteration and Newton's method for different $\alpha$}
    \label{fig:ex1}
\end{figure}

\begin{figure}[h]
  \begin{center}
    \scalebox{0.7}{\includegraphics{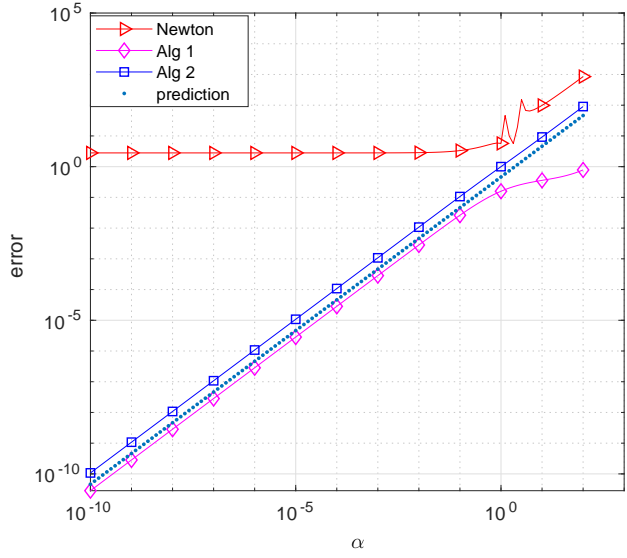}}
    \caption{Dependence of single step error on $\alpha$}
      \label{fig:onestep}

  \end{center}
\end{figure}

\subsection{Computing the ground state of bosons}\label{subsec:gpe}
The Gross-Pitaevskii equation (GPE) is a nonlinear PDE obtained by a Hartree-Fock approximation (see \cite{Saad:2010:ELECSTRUCT}) of the Schrödinger equation. It describes the ground state of identical bosons in a quantum system. We consider the case of a rotating Bose-Einstein condensate on the domain $\mathbb{D}=(-L,L)\times (-L,L)$. In this case, the GPE for the wave function $\Psi: \mathbb{R}^2\to\mathbb{C}$ under an external potential $V:\mathbb{R}^2\to\mathbb{R}$ is
\begin{equation}\label{eq:gpe}
\left(-\frac{1}{2}\Delta-i\Omega\dfrac{\partial}{\partial \phi}\Psi(x,y)+V(x,y)\right)+b|\Psi(x,y)|^2\Psi(x,y) = \lambda \Psi(x,y),\quad (x,y)\in\mathbb{D}.
\end{equation}  
Here, $\dfrac{\partial}{\partial \phi} = y\dfrac{\partial}{\partial x}-x\dfrac{\partial}{\partial y}$.
The scalar $b$ is a constant indicating the strength of interaction between the bosons and $\Omega$ is the angular velocity of rotation. We choose the boundary condition $\Psi(x,y) = 0$ for $(x,y)\in\partial D$. 

 We perform a central difference discretization of \eqref{eq:gpe} using a uniform grid of $N+2$ points along each dimension with grid spacing $\Delta x$. Details are in \cite[Section~5.1]{Jarlebring:2014:Inverse}. This leads to a problem of size $n = 2N^2$ with
\begin{eqnarray}\label{eq:gpeprob}
A(v) &=& \begin{pmatrix}\re \tilde{A}_0& -\im \tilde{A}_0\\ \im \tilde{A}_0&  \re \tilde{A}_0\end{pmatrix} +\frac{\gamma}{v^Tv}B(v),\nonumber\\
B(v) &=& \begin{pmatrix}\diag(v_1)+\diag(v_2)& 0\\0&  \diag(v_1)+\diag(v_2)\end{pmatrix}^2\nonumber\\
v &=& \begin{pmatrix}v_1,& v_2\end{pmatrix}^T. \nonumber
\end{eqnarray}
where $\tilde{A}_0$ is the discretization of the linear operator $-\frac{1}{2}\Delta-i\Omega\dfrac{\partial}{\partial \phi}+V(x,y)$ and $\gamma = b(\Delta x)^{-2}$. 
Note than both $v_1, v_2 \in\mathbb{R}^{N^2}$ and $v_1+iv_2$ gives the vectorization of $\Psi$ evaluated at the interior points. We have
\begin{equation}\label{eq:J_gpe}
\begin{split} 
J(v) =& \begin{pmatrix}\re \tilde{A}_0& -\im \tilde{A}_0\\ \im \tilde{A}_0&  \re \tilde{A}_0\end{pmatrix}\\ &+\frac{\alpha}{v^Tv}\left[\begin{pmatrix}3\diag(v_1)^2+\diag(v_2)^2& 2\diag(v_1)\diag(v_2)\\ 2\diag(v_1)\diag(v_2)& \diag(v_1)^2+3\diag(v_2)^2\end{pmatrix}-\frac{2}{v^Tv}B(v)vv^T\right].
\end{split}
\end{equation} 
\begin{figure}[htbp!]
     \subfigure{\label{fig:ex21}\includegraphics[width=60mm]{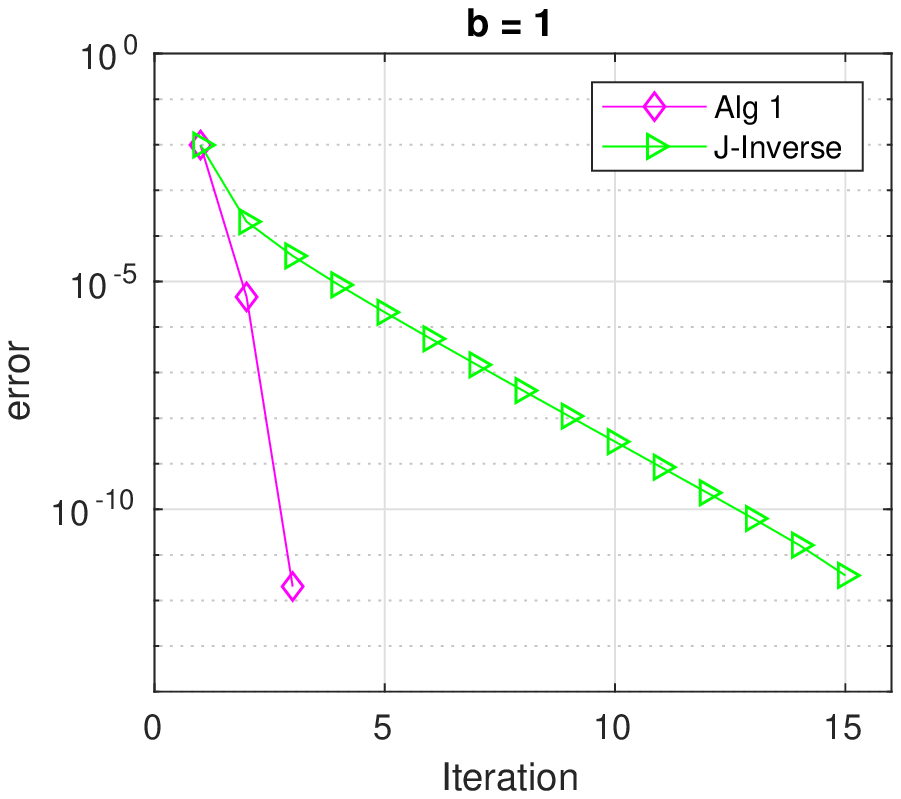}}
     \subfigure{\label{fig:ex22}\includegraphics[width=60mm]{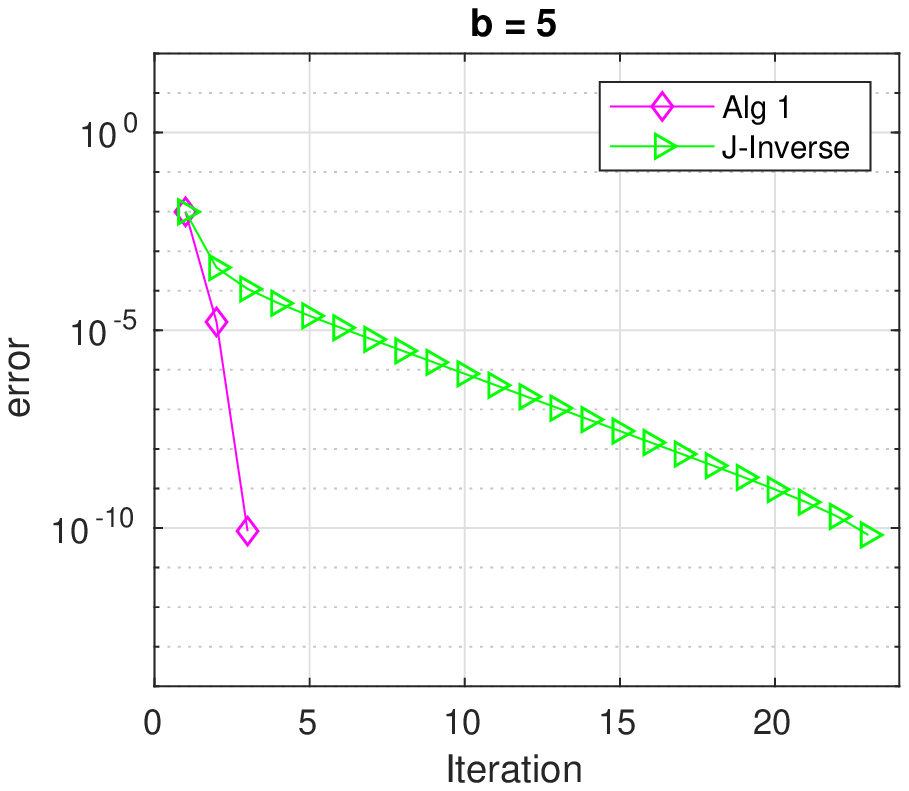}}
     \subfigure{\label{fig:ex23}\includegraphics[width=60mm]{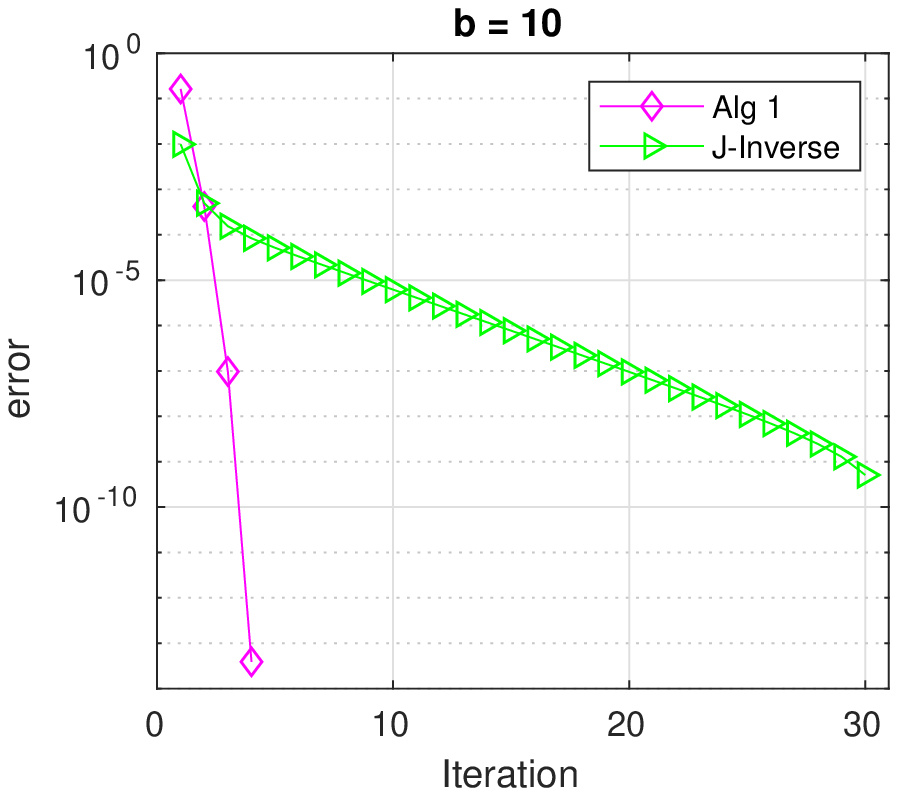}}
     \subfigure{\label{fig:ex24}\includegraphics[width=60mm]{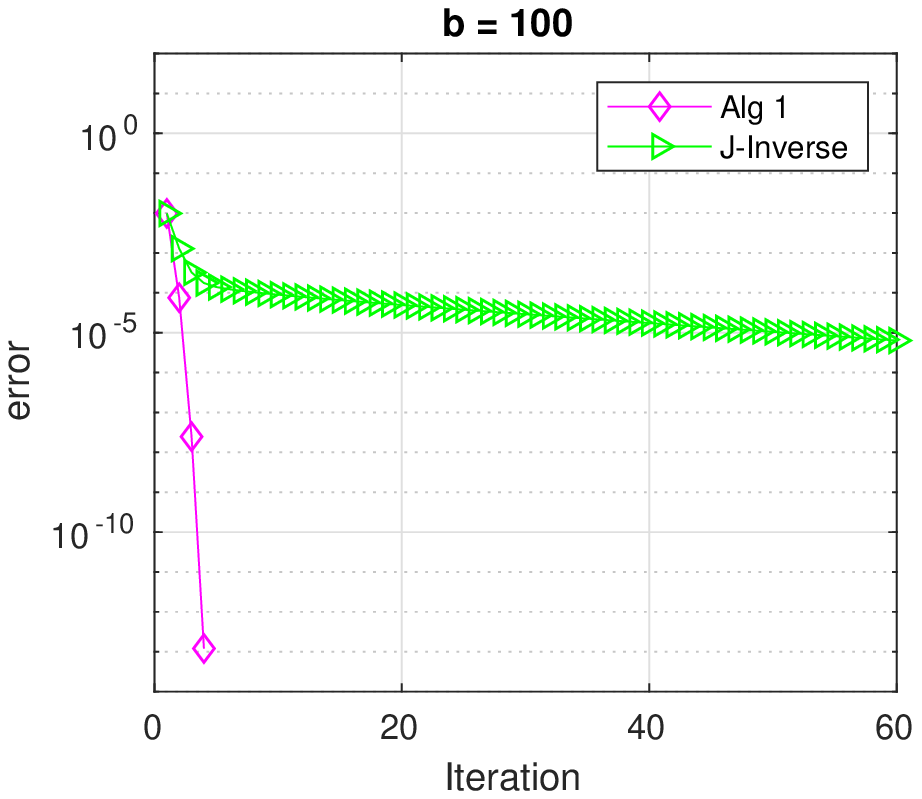}}

     \caption{Algorithm~\ref{alg:jversion} and J-Inverse iteration applied to GPE for different $b$}
    \label{fig:ex2}
\end{figure}
We use Algorithm 1 on with $J$ as defined by \eqref{eq:J_gpe}.

Since one step of both Algorithm~1 and Algorithm~2 requires
the solution of a standard eigenvalue problem, we need to
select an appropriate eigenpair at each step. Special attention
is needed in the selection in this problem. We select a new iterate
in a way that minimizes the difference between two iterates.
More precisely, we choose $\delta \in (0,1)$ and select all eigenpairs which correspond to eigenvalue within a radius $\delta$ of a given target.
We then do a least squares fitting to find the linear combination of these eigenvectors which is closest to the previous iterate.
This is needed due to the fact that the
problem has highly clustered eigenvalues.

\subsection{Invariant subspace}\label{subsec:invarsubs}
We consider
\begin{equation}\label{eq:yang}
A(V) = A_0+\alpha\diag({A_0}^{-1}\diag(h(VV^T)))
\end{equation}
where $A_0$ is the discrete 1D Laplacian. Problems of this type occur frequently in electronic structure calculations when using a Hartree-Fock discretization of the Schrödinger equation. See \cite{Yang:2009:SCF} for a discussion of the problem type and convergence results of SCF applied to \eqref{eq:yang}. 

If we let $e_i$ denote the the $i$th column of the identity matrix $I_n$ and $E_{i,j} = e_ie_j^T$, then
\[
A(V) = A_0+\alpha\sum\limits_{i=1}^{n}(e_i^Th(VV^T)e_i)\diag(A_0^{-1}E_{i,i}).
\]
We refer the reader to Appendix~\ref{sec:deriv} for a derivation of $J(V)$ for problems of this type.

The implementation of Algorithm~\ref{alg:aversion} is straightforward from \eqref{eq:scf}. We also illustrate the importance of the implicit formulation of Algorithm~\ref{alg:jversion} by inexactly solving \eqref{eq:qn1finale}. We do this with the optimization subroutine \textit{fminsearch}. It provides us a way to test Algorithm 1 for relatively small examples as a proof of concept. We use $n = 10$, $p = 3$ and apply Algorithm 1 and Algorithm 2 for two different values of $\alpha$.
\begin{figure}[htbp!]
    \subfigure{\label{fig:ex21}\includegraphics[width=60mm]{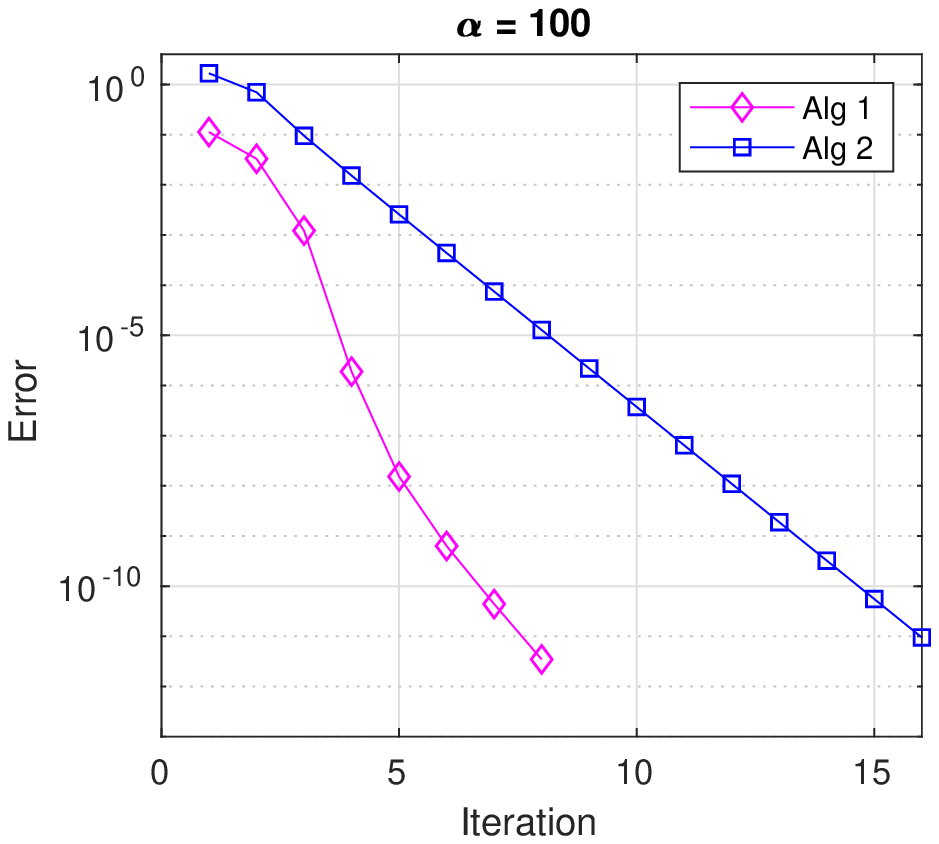}}
    \subfigure{\label{fig:ex22}\includegraphics[width=60mm]{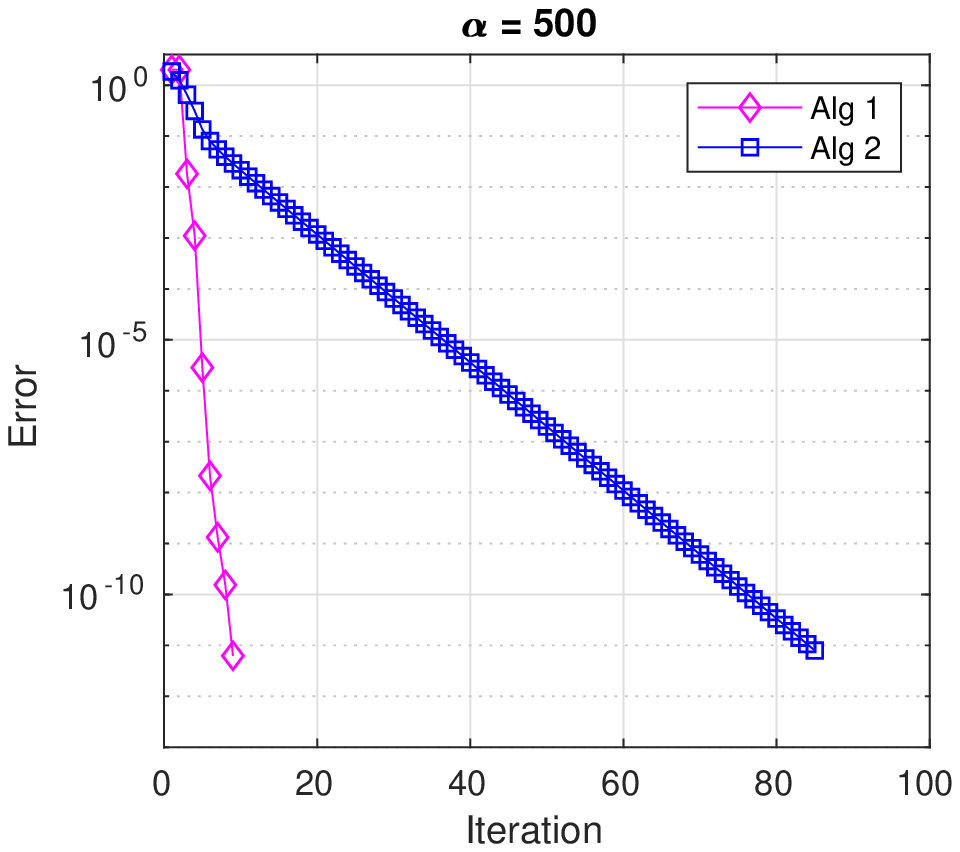}}
     \caption{Algorithm 1 and Algorithm 2 for different $\alpha$ applied t}
        \label{fig:ex3}
\end{figure}
In Figure~\ref{fig:ex3}, we observe that Algorithm~\ref{alg:aversion} converges linearly and Algorithm~\ref{alg:jversion} has much faster convergence, with an initial quadratic phase. The number of iterations required for convergence increases with increase in $\alpha$, as expected from the single step analysis of section~\ref{subsec:singlestep}. The initial quadratic phase is suceeded by an asymptotic slowdown, which can be attributed to the inexact solution of the update equation \eqref{eq:qn1finale}.

\section{Conclusions and Outlook}
This paper shows that taking an inexact Newton approach towards deriving algorithms for problems with eigenvector nonlinearities leads to new algorithmic insights. Using this approach, we derive two algorithms. Algorithm~\ref{alg:aversion} is shown to be the widely used SCF algorithm. This result shows a connection between Newton's method and the SCF algorithm which was previously unknown. Algorithm~\ref{alg:jversion} is a new algorithm, to the best of our knowledge.

We prove that Algorithm~\ref{alg:jversion} exhibits quadratic local convergence. Both Algorithm~\ref{alg:jversion} and Algorithm~\ref{alg:aversion} have favourable convergence properties for problems that are close to being linear, as shown by the single step analysis of Section~\ref{subsec:singlestep}. Numerical simulations for the Gross-Pitaevskii equation in Section~\ref{subsec:gpe} show that Algorithm~\ref{alg:jversion} is a competitive algorithm for the $p = 1$ case. The $p>1$ example in Section~\ref{subsec:invarsubs} shows that Algorithm~\ref{alg:jversion} converges faster than Algorithm~\ref{alg:aversion} even when we solve the update equation \eqref{eq:qn1finale} inexactly.

There are several improvements of the SCF algorithm. Some of these techniques may be interpretable from an implicit viewpoint as well. For instance, acceleration schemes such as DIIS \cite{Rohwedder:2011:DIIS} might be seen as an inexact Newton algorithm. This could be combined with other convergence theory to gain further understanding of DIIS. Another direction that can be explored is to develop application-specific strategies to solve the \eqref{eq:qn1finale} for $p>1$ or approximate solution techniques that lead to superlinear convergence. 
\appendix
\section{Proofs}\label{sec:convproof}
\subsection{Proof of Lemma 4}
\begin{proof}
  
%


Let $F'_k$ be the Jacobian of $F$ evaluated in the iterate $k$.
In the notation of \cite{Dembo:1982:INEXACT} we introduce
a residual denoted $r_k$, corresponding to the difference between a Newton step and
a inexact Newton step:
\begin{eqnarray}\label{eq:newtnewt2}
F'_k\begin{bmatrix}\Delta v^{(k)}\\ \Delta s^{(k)}\end{bmatrix} &=& -F^{(k)}+r_k.
\end{eqnarray}
For notational convenience we now define 
\[
\alpha_k:=\frac{d}{dv}\vect(V^TV)_{V=V^{(k)}}
\]
such that the $Z_k$ in the  Jacobian \eqref{eq:newt} is
$Z_k=\frac12(\alpha_k+\alpha_{k+1})$.
Then, subtracting \eqref{eq:qn1} from \eqref{eq:newtnewt2} the residual
becomes 
\begin{equation*}
  r_k =
  \begin{bmatrix}
    0&0\\
    \frac12(-\alpha_k+\alpha_{k+1})&0    
  \end{bmatrix}\begin{bmatrix}\Delta v^{(k)}\\ \Delta s^{(k)} \end{bmatrix}
\end{equation*}
In our setting
\[
\frac12(-\alpha_k+\alpha_{k+1})=
  \mathcal{O}(\|V^{(k+1)}-V^{(k)}\|)\leq \mathcal{O}\left(\|\Delta V^{(k)}\|_F\right) \le \mathcal{O}\left( 
\left\|\begin{bmatrix}
  \Delta v^{(k)}\\
  \Delta s^{(k)}
\end{bmatrix}\right\|\right)
\]
where we used the smoothness of $\frac{d}{dv}\vect(V^TV)$
such that
\begin{equation}\label{eq:rkbound1}
\|r_k\|\le \mathcal{O}(\left\|\begin{bmatrix}
  \Delta v^{(k)}\\
  \Delta s^{(k)}
\end{bmatrix}\right\|^2)
\end{equation}
By the assumption of monotonic convergence, we have,
\begin{equation}\label{eq:vkvst}
\left\|\begin{bmatrix}\Delta v^{(k)}\\ \Delta s^{(k)}\end{bmatrix}\right\| \le 2\left\|\begin{bmatrix}v^{(k)}-v_*\\s^{(k)}-s_*\end{bmatrix}\right\| = \mathcal{O}\left(\left\|\begin{bmatrix}v^{(k)}-v_*\\s^{(k)}-s_*\end{bmatrix}\right\|\right).
\end{equation}
From the implicit function theorem (e.g. the formulation in \cite[Theorem~9.28]{Rudin:1976:PRINCIPLES}) and the assumption about the invertability of the Jacobian at the solution we get that
\begin{equation}\label{eq:ftaylor}
\begin{bmatrix}
  v^{(k)}\\
  s^{(k)}
\end{bmatrix}=
\begin{bmatrix}
  v_*\\
  s_*
\end{bmatrix}+(F'_*)^{-1}F^{(k)}+\mathcal{O}(\|F^{(k)}\|^2).
\end{equation}
where $F_*'$ is the Jacobian of $F$ evaluated in the solution.
The combination of \eqref{eq:rkbound1}, \eqref{eq:vkvst} and \eqref{eq:ftaylor} leads to 
\begin{equation*}
\|r_k\| = \mathcal{O}(\|F^{(k)}\|^2).
\end{equation*} 
By \cite[Theorem~3.3]{Dembo:1982:INEXACT}, the proof is complete.

\end{proof}

\subsection{Proof of Theorem 7}
\begin{proof}
For any sequence of pairs $(V^{(k)},S^{(k)})$ that satisfies \eqref{eq:qn1finale} with the ${V^{k}}^TV^{k} = I_p$, the corresponding vectorized pairs must satisfy \eqref{eq:qn1}. The iterative method dictated by \eqref{eq:qn1} is also an Inexact Newton Method with the Jacobian approximation 
\begin{equation*}
  \begin{bmatrix}
    A_k&\tilde{B}_k\\
    \tilde{C}_k&0
  \end{bmatrix}
\end{equation*}
where $\tilde{B}_k$ is the modified (1,2) block of the Jacobian. Using this approximation, the residual vector is
\begin{equation*}
r_k = \begin{bmatrix}
    0&\tilde{B}_k-B_k\\
    \tilde{C}_k-C_k&0
  \end{bmatrix}\begin{bmatrix}\Delta v^{(k)}\\ \Delta s^{(k)} \end{bmatrix}
\end{equation*}
Note that
\[
\|\tilde{B}_k-B_k\| = \|I_p\otimes (V^{(k+1)}-V^{(k)})\| \leq \mathcal{O}(\|\Delta V^{(k)}\|_F) \le \mathcal{O}\left(\left\|\begin{bmatrix}\Delta v^{(k)}\\ \Delta s^{(k)} \end{bmatrix}\right\|\right). 
\] 
and hence
\[
\left\|\begin{bmatrix}
    0&\tilde{B}_k-B_k\\
    \tilde{C}_k-C_k&0
  \end{bmatrix} \right\| = \max\{\|\tilde{B}_k-B_k\|,\|\tilde{C}_k-C_k\|\} = \mathcal{O}\left(\left\|\begin{bmatrix}\Delta v^{(k)}\\ \Delta s^{(k)} \end{bmatrix}\right\|\right).
\] 

Repeating the final arguement in the proof of Lemma~\ref{thm:newt_mod1}, the proof is complete.
\end{proof}
\section{Derivation of $J$ for a problem type}\label{sec:deriv}
We consider a specific problem with
\begin{equation*}
A(V) = A_0+\sum\limits_{i=1}^{k}\left(c_i^Th(VV^T)d_i\right)A_i
\end{equation*}
where $c_i,d_i \in\mathbb{R}^{n}$ and $A_i\in\mathbb{R}^{n\times n}$, $\forall i\in\{1,\ldots,k\}$.

Using \eqref{eq:jaclhs}, we have
\begin{equation}\label{eq:specjac}
\begin{split}
J(v) &= I_p\kron A(V)+\sum\limits_{i=1}^{k}\Big(\dfrac{d}{dv}\left(I_p\kron(c_i^Th(VV^T)d_i)A_i\right)\hat{v}\Big)_{\hat{v}=v}\\
     &= I_p\kron A(V)+\sum\limits_{i=1}^{k}\Big(\vect(A_i\hat{V})c_i^T\dfrac{d}{dv}(h(VV^T)d_i)\Big)_{\hat{v}=v}.
\end{split}
\end{equation}
\begin{theorem}\label{thm:hgsame}
Suppose $V\in\mathbb{R}^{n\times p}$ has full column rank. For any $d\in\mathbb{R}^n, \exists\delta > 0$ such that 
\begin{equation*}
\frac{d}{dv}h(VV^T)d = \frac{d}{dv}g_{\delta}(VV^T)d,
\end{equation*}
where 
\begin{equation*}
g_{\delta}(x) = h(x-\delta).
\end{equation*}
\end{theorem}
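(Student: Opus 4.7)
I would prove this by showing that $h$ and $g_\delta$ agree as matrix functions on an entire neighborhood of $V$, not merely at $V$ itself; equality of the two functions on an open set immediately gives equality of their directional derivatives in every direction, and hence of the Jacobians.

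The plan is to exploit the full-rank assumption to separate the spectrum of $VV^T$ away from the discontinuity of $h$. Since $V\in\mathbb{R}^{n\times p}$ has full column rank, $VV^T$ is symmetric positive semidefinite of rank exactly $p$, so its eigenvalues are $\lambda_1\ge\lambda_2\ge\cdots\ge\lambda_p>0$ together with $n-p$ zeros. I would then \emph{choose} $\delta$ to lie strictly in the gap, namely $\delta\in(0,\lambda_p)$.

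Next I would argue that this separation of the spectrum persists locally. Because the set of matrices of full column rank is open, there is a neighborhood $\mathcal{U}$ of $V$ on which every $V'\in\mathcal{U}$ has $\rank(V')=p$; consequently $V'V'^T$ has exactly $n-p$ eigenvalues equal to $0$ and exactly $p$ strictly positive ones. By continuity of the eigenvalues of a symmetric matrix with respect to its entries (Weyl's inequality), I can shrink $\mathcal{U}$ if necessary so that all $p$ positive eigenvalues of $V'V'^T$ remain $>\delta$ throughout $\mathcal{U}$. In particular, no eigenvalue of $V'V'^T$ lies in the interval $(0,\delta]$ for any $V'\in\mathcal{U}$, so the spectrum always avoids the discontinuity points of both $h$ (at $0$) and $g_\delta$ (at $\delta$) in the sense that matters: on the spectrum $\{0,\lambda_1',\ldots,\lambda_p'\}$ of $V'V'^T$, we have $h(0)=g_\delta(0)=0$ and $h(\lambda_i')=g_\delta(\lambda_i')=1$ for $i=1,\ldots,p$.

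Since the matrix functions are defined spectrally (as in the proof of Theorem~\ref{thm:subspace}), the identity of values on the spectrum implies $h(V'V'^T)=g_\delta(V'V'^T)$ for every $V'\in\mathcal{U}$. Multiplying by $d$ and differentiating in $v$ at $V$ then yields the claimed equality of Jacobians. The main subtlety, and the only real obstacle, is the continuity step for the zero eigenvalues: one must observe that they remain \emph{exactly} zero on $\mathcal{U}$ (not merely small), which follows automatically because $V'V'^T$ has rank at most $p$ and we have ensured rank exactly $p$. This rank-based pinning of the zero eigenvalues is what allows the argument to sidestep the non-differentiability of $h$ at $0$.
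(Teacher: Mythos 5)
Your proposal is correct and takes essentially the same approach as the paper: both arguments hinge on the fact that the zero eigenvalues of $V'V'^T$ stay \emph{exactly} zero in a neighborhood of $V$ (by rank considerations), while the $p$ positive eigenvalues stay bounded away from a fixed $\delta$ (by continuity), so that $h$ and $g_\delta$ coincide as matrix functions on a full neighborhood and hence have identical Jacobians. Your version is arguably stated more cleanly — you fix $\delta \in (0,\lambda_p)$ up front and then shrink the neighborhood, whereas the paper couples $\delta$ to the perturbation size $\|\Delta V\|$ and writes a somewhat opaque condition $\delta > \mathcal{O}(\|\Delta V\|)$ — but the underlying mechanism is identical.
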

\begin{proof}
Since $V$ is full rank, we can diagonalize $VV^T$ as $VV^T = PSP^T$, where
\begin{equation*}
s_{i,i} \begin{cases}>0,&\quad\textrm{if }0\leq i \leq p\\
                    =0,&\quad\textrm{if }p< i\leq n.
          \end{cases}
\end{equation*}
and $P$ is orthogonal. Hence, if $0< \delta_0 < \min\limits_{i\in\{1,\ldots,p\}}s_{i,i}$, we have
\begin{equation}\label{eq:gdelta} 
h(VV^T) = Ph(S)P^T = Pg_{\delta_0}(S)P^T = g_{\delta_0}(PSP^T) =  g_{\delta_0}(VV^T).
\end{equation}
By the lower semicontinuity of the rank function, $\exists r>0$ such that $\norm{\Delta V}<r$ implies
\[
\rank(V+\Delta V) = \rank(V) = p.
\]
Using the continuity of the eigendecomposition, we have
\begin{equation*}
(V+\Delta V)(V+\Delta V)^T = (P+\Delta P)(S+\Delta S)(P+\Delta P)^T
\end{equation*}
where $\norm{\Delta P} = \mathcal{O}(\norm{\Delta V})$ and 
\begin{equation*}
\Delta s_{i,i} =\begin{cases}\mathcal{O}(\norm{\Delta V}),&\quad\textrm{if }0\leq i \leq p\\
                    0,&\quad\textrm{if }p< i\leq n.
          \end{cases}
\end{equation*}
Note that $\norm{\Delta V}$ can be chosen to be arbitrarily small and by \eqref{eq:gdelta}, we can choose $\delta > \mathcal{O}(\|\Delta V\|)$ such that 
\begin{equation*}
h\Big((V+\Delta V)(V+\Delta V)^T \Big)-h(VV^T) = g_{\delta}\Big((V+\Delta V)(V+\Delta V)^T \Big)-g_{\delta}(VV^T).
\end{equation*}
For such $\delta$, we have
\begin{equation*}
\begin{split}
&\lim\limits_{\epsilon\to 0}\frac{h\Big((V+\epsilon\Delta V)(V+\epsilon \Delta V)^T\Big)-h(VV^T)}{\epsilon}\\
 = &\lim\limits_{\epsilon\to 0}\frac{g_{\delta}\Big((V+\epsilon\Delta V)(V+\epsilon \Delta V)^T\Big)-g_{\delta}(VV^T)}{\epsilon}.
\end{split}
\end{equation*}
Hence, the proof if complete.
\end{proof}

\begin{theorem}[Frechet derivative of shifted heavyside function]\label{thm:frechhvy}
Let $L_g(W,E)$ denote the Frechet derivative of $g_{\delta}$ at $W$ and applied to $E$. For any $V\in\mathbb{R}^{n\times p}$ such that $V^TV = I$, we have
\begin{equation*}
L_g(VV^T,E) = (I-VV^T)EVV^T+VV^TE(I-VV^T).
\end{equation*}
\end{theorem}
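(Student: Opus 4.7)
The plan is to exploit the fact that $W=VV^T$ is an orthogonal projector with spectrum exactly $\{0,1\}$, and that the shifted heaviside $g_\delta$, with $0<\delta<1$ (as guaranteed by Theorem~\ref{thm:hgsame}), is locally constant on neighborhoods of both eigenvalues. Thus $g_\delta$ acts on matrices near $W$ as a smooth (in fact holomorphic on a neighborhood of $\sigma(W)$) spectral projector, and I can legitimately invoke the Daleckii--Krein / divided-difference formula for the Frechet derivative of a matrix function at a symmetric matrix.

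First I would set up the spectral decomposition. Choose $V_\perp\in\mathbb{R}^{n\times(n-p)}$ such that $U=[V\mid V_\perp]$ is orthogonal; then $VV^T=U\,\mathrm{diag}(I_p,0_{n-p})\,U^T$. In this basis any direction $E\in\mathbb{R}^{n\times n}$ (symmetric) decomposes as
\[
U^T E U=\begin{pmatrix} V^TEV & V^TEV_\perp\\ V_\perp^TEV & V_\perp^TEV_\perp\end{pmatrix}.
\]
Next I would compute the divided differences of $g_\delta$ at the two distinct eigenvalues $\lambda_1=1$, $\lambda_2=0$. Since $g_\delta$ is constant ($=1$) near $\lambda=1$ and constant ($=0$) near $\lambda=0$, the diagonal entries $g_\delta[\lambda,\lambda]=g_\delta'(\lambda)$ vanish; the off-diagonal divided difference is $g_\delta[1,0]=g_\delta[0,1]=\tfrac{1-0}{1-0}=1$. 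Hence the Daleckii--Krein Hadamard matrix in the $U$-basis is $\begin{pmatrix}0&\mathbf{1}\\\mathbf{1}&0\end{pmatrix}$ (block form, with $\mathbf{1}$ the all-ones block of appropriate size).

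Applying the Daleckii--Krein formula then yields
\[
L_g(VV^T,E)=U\begin{pmatrix}0 & V^TEV_\perp\\ V_\perp^TEV & 0\end{pmatrix}U^T=V(V^TEV_\perp)V_\perp^T+V_\perp(V_\perp^TEV)V^T.
\]
Finally I would use $VV^T\cdot V = V$, $V_\perp^T V=0$, and the identity $V_\perp V_\perp^T=I-VV^T$ to rewrite the two terms as $VV^TE(I-VV^T)$ and $(I-VV^T)EVV^T$ respectively, giving the claimed formula.

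The only delicate point — and really the whole reason Theorem~\ref{thm:hgsame} precedes this statement — is justifying that the discontinuous function $g_\delta$ has a well-defined Frechet derivative at $W=VV^T$. The resolution is that with $0<\delta<1$ the contour $\gamma$ enclosing only the eigenvalue $1$ avoids $\sigma(W)$ and continues to avoid $\sigma(W+tE)$ for small $t$, so the Riesz projector representation $g_\delta(W)=\tfrac{1}{2\pi i}\oint_\gamma (zI-W)^{-1}\,dz$ is analytic in $W$ on a neighborhood, and differentiation under the integral gives exactly the Daleckii--Krein formula I used above. This justifies the use of the divided-difference machinery despite $g_\delta$ being only piecewise smooth on $\mathbb{R}$.
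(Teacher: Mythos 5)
Your proof is correct and follows essentially the same route as the paper: both use the orthogonal extension $U=[V\mid V_\perp]$ (the paper writes $V_r$), invoke the Daleckii--Krein divided-difference formula for Frechet derivatives of matrix functions at symmetric arguments (the paper cites Higham's Corollary~3.12), observe that the divided-difference matrix has zero diagonal blocks and all-ones off-diagonal blocks, and unwind the block product to arrive at $(I-VV^T)EVV^T+VV^TE(I-VV^T)$. The only difference is your closing paragraph, which spells out why the divided-difference machinery is legitimate for the piecewise-constant $g_\delta$ (via the Riesz/Cauchy integral representation), a justification the paper leaves implicit but which is a worthwhile addition.
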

\begin{proof}
In this proof we let $O(m,n)$ and $N(m,n)$ denote the $m$-by-$n$ zeroes and ones matrices respectively. Note that we can write $VV^T$ as follows.
\begin{equation*}
VV^T = \begin{pmatrix}V& V_r\end{pmatrix}\begin{bmatrix}I_p& O(p,n-p)\\O(n-p,p)& I_{n-p}\end{bmatrix}\begin{pmatrix}V^T\\ V_r^T\end{pmatrix},
\end{equation*}
where $V_r\in \mathbb{R}^{n\times(n-p)}$, $V_r^TV_r = I$ and
\begin{equation*}
VV^T+V_rV_r^T = I.
\end{equation*}
This implies that $Z = \begin{pmatrix}V& V_r\end{pmatrix}$ is orthogonal and $Z^{-1} = Z^T = \begin{pmatrix}V^T\\V_r^T\end{pmatrix}$. 
Using Corollary 3.12 in \cite{HIGHAMFOM} with $\mathcal{D} = \mathbb{R}\setminus \{\frac{1}{2}\}$, we have
\begin{equation}\label{eq:gfrech}
L_g(VV^T,E) = Z\left[(g_{\delta}[\lambda_i,\lambda_j])\circ Z^TEZ\right]Z^T,
\end{equation}
where 
\begin{equation*}
\lambda_i = \begin{cases}1,\quad\quad\textrm{if}\;0\leq i\leq p\\
                         0,\quad\quad\textrm{if}\;p+1\leq i\leq n\end{cases}.
\end{equation*}
Using $g_{\delta}(0) = 0$ and $g_{\delta}(1) = 1$ gives us
\begin{equation*}
g_{\delta}[\lambda_i,\lambda_j] = \begin{bmatrix}O(p,p)& N(p,n-p)\\ N(n-p,p)& O(n-p,n-p)\end{bmatrix}
\end{equation*}
and hence,
\begin{equation}\label{eq:gdivdif2}
\begin{split}
&g_{\delta}[\lambda_i,\lambda_j]\circ Z^TEZ \\=\\ &\begin{bmatrix}O(p,p)& \begin{pmatrix}I_p& O(p,n-p)\end{pmatrix}Z^TEZ\begin{pmatrix}O(p,n-p)\\ I_{n-p}\end{pmatrix}\\
                                                  \begin{pmatrix}O(n-p,p)& I_{n-p}\end{pmatrix}Z^TEZ\begin{pmatrix}I_p\\ O(n-p,p)\end{pmatrix}& O(n-p,n-p)\end{bmatrix}
\end{split}
\end{equation}
Noting that
\begin{eqnarray*}
 \begin{pmatrix}I_p& O(p,n-p)\end{pmatrix}Z^T &=& V^T,\\
 Z\begin{pmatrix}O(p,n-p)\\ I_{n-p}\end{pmatrix} &=& V_r
\end{eqnarray*}
and combining with \eqref{eq:gfrech} and \eqref{eq:gdivdif2} gives us
\begin{equation*}
\begin{split}
L_g(VV^T,E) &= \begin{pmatrix}V& V_r\end{pmatrix}\begin{bmatrix}O(p,p)& V^TEV_r\\
                                                  V_r^TEV& O(n-p,n-p)\end{bmatrix}\begin{pmatrix}V^T\\ V_r^T \end{pmatrix}\\
            &= V_rV_r^TEVV^T+VV^TEV_rV_r^T\\ 
            &= (I-VV^T)EVV^T+VV^TE(I-VV^T).
\end{split}
\end{equation*}
Hence, the proof is complete.
\end{proof}

Using Theorem~\ref{thm:frechhvy} and Theorem~\ref{thm:hgsame}, \eqref{eq:specjac} leads to the fact that for any $V\in\mathbb{R}^{n\times p}$ such that $V^TV = I$,
\begin{equation}\label{eq:jacex2}
\begin{split}
J(v) &= I_p\kron A(V)+\sum\limits_{i=1}^{k}\Big(\vect(A_i\hat{V})c_i^T\dfrac{d}{dv}(h(VV^T)d_i)\Big)_{\hat{v}=v}\\
     &= I_p\kron A(V)+\sum\limits_{i=1}^{k}\Big(\vect(A_i\hat{V})c_i^T\dfrac{d}{dv}(g_{\delta}(VV^T)d_i)\Big)_{\hat{v}=v}\\
     &= I_p\kron A(V)+\sum\limits_{i=1}^{k}\vect(A_iV)c_i^T\left(\frac{d}{dw}g_{\delta}(W)d_i\right)_{W=VV^T}\left(\frac{d}{dv}\vect(VV^T)\right).
\end{split}
\end{equation}
Note that
\begin{equation}\label{eq:dhsimple}
\begin{split}
c^T\frac{d}{dw}(g_{\delta}(W)d)\big|_{W=VV^T} &= \begin{bmatrix}c^TL_g(VV^T,E_{1,1})d& c^TL_g(VV^T,E_{2,1})d& \ldots& c^TL_g(VV^T,E_{n,n})d\end{bmatrix}\\
                          &:= \mathcal{L}_g(V,c,d),
\end{split}
\end{equation}
where $E_{i,j} = e_ie_j^T$. Let $P_{n^2}\in\mathbb{R}^{n^2\times n^2}$ be the shuffle matrix such $\vect(W) = P_{n^2}\vect(W^T)$ for any $W\in\mathbb{R}^{n\times n}$. Then
\begin{equation}\label{eq:shuf}
\begin{split}
\frac{d}{dv}\vect(VV^T) = V\otimes I_n + P_{n^2}\Big(\frac{d}{dv}\vect(v\hat{v}^T)\Big)_{\hat{v}=v} = \Big(I_{n^2}+P_{n^2}\Big)(V\kron I_n).\\
\end{split}
\end{equation}
Combining \eqref{eq:jacex2}, \eqref{eq:dhsimple} and \eqref{eq:shuf} leads to 
\begin{equation*}
J(v) = I_p\kron A(V)+\Bigg(\sum\limits_{i=1}^{k}\vect\Big(A_iV\Big)\mathcal{L}_g\Big(V,c_i,d_i\Big)\Bigg)\Big(I_{n^2}+P_{n^2}\Big)(V\kron I_n)
\end{equation*}
for any $V\in\mathbb{R}^{n\times p}$ such that $V^TV = I$.

\bibliography{eliasbib}
\bibliographystyle{spmpsci}

\end{document}